\newtheorem{theorem}{Theorem}
\newtheorem{conjecture}[theorem]{Conjecture}
\newtheorem{corollary}[theorem]{Corollary}
\newtheorem{example}[theorem]{Example}
\newtheorem{lemma}[theorem]{Lemma}
\newtheorem{remark}[theorem]{Remark}
\def\eps{\varepsilon}
\def\ZZ{\mathbb{Z}}
\def\NN{\mathbb{N}}
\def\PP{\mathbb{P}}
\def\EE{\mathbb{E}}
\def\F{\mathcal{F}}
\def\E{\mathcal{E}}
\def\un{\underline}
\DeclareMathOperator{\var}{var}
\begin{document}

\title{Exponential law for random subshifts of finite type}
\author{J\'er\^ome Rousseau, Benoit Saussol, Paulo Varandas}

\begin{abstract}
In this paper we study the distribution of hitting times for a class of random dynamical systems.
We prove that for invariant measures with super-polynomial decay of correlations  hitting times to dynamically
defined cylinders satisfy exponential distribution. Similar results are obtained for random expanding maps.
We emphasize that what we establish is a quenched exponential law for hitting times.
\end{abstract}

\keywords{Random dynamical systems, hitting times. exponential law}

\address{J\'er\^ome Rousseau and Paulo Varandas, Departamento de Matem\'atica, Universidade Federal da Bahia\\
Av. Ademar de Barros s/n, 40170-110 Salvador, Brazil}
\email{jerome.rousseau@ufba.br} 
\urladdr{http://www.sd.mat.ufba.br/~jerome.rousseau}
\email{paulo.varandas@ufba.br, +557191781187}
\urladdr{http://www.pgmat.ufba.br/varandas}

\address{Benoit Saussol, Universit\'e Europ\'eenne de Bretagne, Universit\'e de Brest, Laboratoire de Math\'ematiques de Bretagne Atlantique CNRS UMR 6205, 6 avenue Victor le Gorgeu, CS93837, F-29238 Brest Cedex 3, France}
\email{benoit.saussol@univ-brest.fr}
\urladdr{http://www.math.univ-brest.fr/perso/benoit.saussol/}
\thanks{This work was partially supported by the ANR Perturbations (ANR-10-BLAN 0106), DynEurBraz, FAPESB and CNPq}
\maketitle

\section{Introduction}
The theory of random dynamical systems has been introduced to obtain more accurate models for the motion of particles or physical phenomena in general. Indeed, instead of iterating the same transformation one can add some random noise or small perturbations, or more generally work with a family of transformation randomly chosen to represent the errors of approximations or observations. One can see the review \cite{kifliu} for an introduction to this theory.

Another theory which has been widely studied in the last few years (e.g. the review \cite{sau}) is the quantitative description of recurrence in deterministic dynamical systems.
More precisely, let $(X,T,\mu)$ be a measure preserving dynamical system, the hitting time of a point $x\in X$ to a set $A$ is defined by 
\[\tau_A(x):=\inf\{k>0, T^kx\in A\},\]
when $x\in A$, we will speak of return time. This theory is interested in the behaviour of $\tau_A(x)$ when $\mu(A)\rightarrow0$.

A first point of view is to study the return time of a point $x$ in its $r$-neighborhood (i.e. $A=B(x,r)$) and its behavior when $r\rightarrow 0$. It has been proved \cite{B,BS, S} that for rapidly mixing systems $\tau_{B(x,r)}(x)\underset{r\rightarrow0}{\sim}  r^{-d_\mu(x)}$ where $d_\mu(x)$ is the pointwise dimension of the measure $\mu$ in $x$. We refer the reader to \cite{G1,G2} for the same type of results for hitting time and \cite{RS,R} for generalizations.

Another point of view is to study the distribution of return times and hitting time statistics (we can cite the review of Coelho \cite{C} and Abadi and Galves \cite{AG} and also the article of Collet, Galves and Schmitt \cite{CGS} which is one of the first results on this domain). More precisely, we define the distribution of normalized hitting time by
\[F^{hit}_A(t)=\mu\left(\left\{x\in X:\tau_A(x)>\frac{t}{\mu(A)}\right\}\right)\]
and the distribution of normalized return times by
\[F^{ret}_A(t)=\frac{1}{\mu(A)}\mu\left(\left\{x\in A:\tau_A(x)>\frac{t}{\mu(A)}\right\}\right).\]
These works studied the convergence in law of the distribution of normalized hitting and return times when $\mu(A)\rightarrow0$ for sets $A$ well-chosen (for example cylinders of a partition).
 
Haydn, Lacroix and Vaienti \cite{HLV} proved that the limit of the distribution of the return times exists if and only if the limit of the distribution of the hitting times exists. Moreover, an exponential distribution was proved for various families of dynamical systems: Axiom A diffeomorphisms \cite{Hir}, Markov chains \cite{P}, some rational transformations \cite{hay}, uniformly expanding transformations of the interval \cite{Co}, and some non-uniformly hyperbolic systems \cite{hsv,v}. Recently, Freitas, Freitas and Todd \cite{FFT1, FFT2} linked hitting time statistics to extreme value theory.
  
Despite the fact that the quantitative study of Poincar\'e recurrence has been widely studied, the quantitative approach to recurrence for random dynamical systems remains much incomplete.
A first attempt was obtained recently by Marie and Rousseau~\cite{MarieR} where they study the random recurrence
rate for super-polynomially mixing random dynamical systems. More precisely, they proved that for rapidly mixing systems, the quenched recurrence rates are equal to the pointwise dimensions of the stationary measure. One can also see the recent article of Ayta\c c, Freitas and Vaienti \cite{AFV} on law of rare events for random dynamical systems.

In this paper, we prove, in Section~\ref{sec:gal} and \ref{sec:shift}, an exponential law for the distribution of the hitting time for random subshifts of finite type assuming some rapid decay of correlations while similar results are proved in Section~\ref{sec:exp} for some random expanding maps.
 Our main theorems are stated precisely in Section \ref{sec:statement} and we apply our result to some random subshift and random expanding maps in Section~\ref{s:examples}.

\section{Statement of the main results}\label{sec:statement}

We first give the definition of a random subshift of finite type.
Let $(\Omega,\theta,\PP)$ be an invertible ergodic measure preserving system, set $X=\NN^\NN$ and let
$\sigma: X \to X$ denote the shift. Let  $b:\Omega \to \mathbb N$ be a random variable such that  $\EE(\log b)<\infty$. 
Let $A=\left\{A(\omega)=(a_{ij}(\omega)):\omega\in \Omega\right\}$ be a random transition matrix, i.e. for any $\omega\in\Omega$, $A(\omega)$ is a $b(\omega)\times b(\theta\omega)$-matrix with entries in $\{0,1\}$, at least one non-zero entry in each row and each column and such that $\omega\mapsto a_{ij}(\omega)$ is measurable for any $i\in\NN$ and $j\in\NN$.  
For any $\omega\in \Omega$
define the subset of the integers $X_\omega=\{1,\ldots,b(\omega)\}$ and
\begin{equation*}
\E_\omega =\{\un{x}=(x_0,x_1,\ldots)\colon x_i\in X_{\theta^i\omega} \text{ and } a_{x_i x_{i+1}}(\theta^i\omega)=1\text{ for all } i\in\NN\}\subset X, 
\end{equation*}
\begin{equation*}
\E = \{(\omega,\un{x})\colon \omega\in\Omega,\un{x}\in\E_\omega\} \subset \Omega\times X.
\end{equation*}
We consider the random dynamical system coded by the skew-product $S : \E \to \E$ given by
$S(\omega,\un{x})= (\theta \omega,\sigma \un{x})$. Let $\nu$ be an $S$-invariant probability
measure with marginal $\PP$ on $\Omega$ and let $(\mu_\omega)_\omega$ denote
its decomposition on $\E_\omega$, that is, $d\nu(\omega,\un{x})=d\mu_\omega(\un{x})d\PP(\omega)$. 
The measures $\mu_\omega$ are called the \emph{sample measures}. We denote by $\mu=\int \mu_\omega \, d\PP$ the marginal of $\nu$ on $X$.

For $\un{y}\in X$ we denote by $C^n(\un{y})=\{\un{z} \in X : y_i=z_i \text{ for all } 
0\le i\le n-1\} $ the  \emph{$n$-cylinder} that contains $\un{y}$. Set $\F_0^n$ as the sigma-algebra in $X$ 
generated by all the $n$-cylinders.

Our hypothesis on $b$ guarantees that the metric entropy $h_\nu(S,\Omega\times\F_0^1)$ is finite and we will denote it by $h$.

We assume the following: there are constants $h\geq h_0>0$, $c>0$, a random variable $C\in L^p(\Omega,\PP)$ 
for some $p\in(0,1]$, a constant $q>\frac{h}{h_0}(1+\frac{3}{p})$ and a function $\alpha(g)$ satisfying $\alpha(g)g^q\to0$ when $g\to+\infty$ such that
for all $m,n$, $A\in\F_0^n$ and $B\in \F_0^{m}$:

\begin{itemize}
\item[(I)] (polynomial decay of correlations) the marginal measure $\mu$ satisfies
\[
\left|\mu(A\cap\sigma^{-g-n}B) -\mu(A)\mu(B)\right|\le C_0\alpha(g);
\]
\item[(II)] (exponential small cylinders) $\mu_\omega(C^n(\un{y}))\le c e^{-h_0 n}$ for any $\un{y}\in X$ and $n\ge 1$, for $\PP$-almost every $\omega\in\Omega$;
\item[(III)] (fibered polynomial decay of correlations) 
\[
\left|\mu_\omega(A\cap\sigma^{-g-n}B) -\mu_\omega(A)\mu_{\theta^{n+g}\omega}(B)\right|\le C(\omega)\alpha(g)
\]
for $\PP$-almost every $\omega\in\Omega$.
\end{itemize}

Given $A\subset X$ consider the hitting time $R(\un{x},A)=\inf\{k\ge 1\colon \sigma^k\un{x}\in A\}$ and set 
$R_n(\un{x},\un{y})=R(\un{x},C^n(\un{y}))$ for $\un{x}, \un{y}\in X$. 

\begin{theorem}\label{thm:main1} 
We assume that hypothesis (I), (II) and (III) hold.
For $\mu$-almost every $\un{y}$, $\PP$-almost every $\omega$ and all $t\ge 0$ we have 
\begin{equation}\label{eq:cvinlaw}
\mu_\omega\left(\un{x}\in X\colon R_n(\un{x},\un{y})>\frac{t}{\mu(C^n(\un{y}))}\right)\to e^{-t},
	\; \text{ as } n\to\infty.
\end{equation}
\end{theorem}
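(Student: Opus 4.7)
My strategy is to follow the blocking argument of Galves--Schmitt, adapted to the random skew-product setting: partition the time interval $\{1,\ldots,N\}$, where $N:=\lfloor t/\mu_n\rfloor$ and $\mu_n:=\mu(C^n(\underline{y}))$, into alternating observation blocks of length $L$ and mixing gaps of length $g$, decouple non-consecutive blocks using hypothesis~(III), and estimate each factor via the annealed information coming from~(I). The Shannon--McMillan--Breiman theorem applied to $(S,\nu)$ gives $-\tfrac{1}{n}\log\mu_n\to h$ for $\mu$-a.e.\ $\underline{y}$, so that $\mu_n$ is of order $e^{-hn}$; combined with~(II), $e^{-(h+o(1))n}\le\mu_n\le ce^{-h_0n}$. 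I choose $g=g(n)\to\infty$ and a block length $L=L(n)$ with $g\ll L\ll 1/\mu_n$ and $K:=\lfloor N/L\rfloor\to\infty$. A preliminary short-return lemma, obtained from a Borel--Cantelli argument based on~(II), shows that for $\mu$-a.e.\ $\underline{y}$ the first return of $\underline{y}$ to $C^n(\underline{y})$ occurs after at least $\rho n$ iterates for some $\rho>0$; together with $g=o(n)$ this ensures $\sigma^{-j}C^n(\underline{y})\cap C^n(\underline{y})=\emptyset$ for all $1\le j\le g$, ruling out the clustering that would otherwise produce a compound Poisson limit.

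\textbf{Fundamental inequality.} The heart of the argument is the approximate memoryless estimate
\begin{equation*}
\bigl|\mu_\omega(R_n>a+L)-\mu_\omega(R_n>a-g)\,\mu_{\theta^{a}\omega}(R_n>L)\bigr|\le C(\omega)\,\alpha(g)+g\mu_n,
\end{equation*}
obtained by applying~(III) with $A=\{R_n>a-g\}\in\F_0^{a-g}$ and $B=\{R_n>L\}\in\F_0^{L}$ (shifted by $\sigma^{-a}$, so that the separation between $A$ and $B$ is exactly~$g$); the term $g\mu_n$ accounts for the possibility of a visit during the gap $(a-g,a]$ and is harmless by the short-return lemma. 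Iterating this estimate through the $K$ blocks produces
\begin{equation*}
\mu_\omega(R_n>N)=\prod_{k=0}^{K-1}\mu_{\theta^{kL}\omega}(R_n>L)+\mathcal{R}_\omega(n),
\end{equation*}
with remainder $\mathcal{R}_\omega(n)$ a sum of $O(K)$ terms each bounded by $C(\theta^{kL}\omega)\alpha(g)+g\mu_n$.

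\textbf{Convergence of the product.} To close the argument I need both that the product above converges to $e^{-t}$ and that $\mathcal{R}_\omega(n)\to 0$ $\PP$-a.s. For the first point, (I) combined with (III) shows that $\mu_{\theta^{kL}\omega}(R_n>L)=1-L\mu_n+o(\mu_n/K)$ for $\PP$-typical $\omega$, so that
\begin{equation*}
\prod_{k=0}^{K-1}\mu_{\theta^{kL}\omega}(R_n>L)=(1-L\mu_n)^{K}(1+o(1))\longrightarrow e^{-t},
\end{equation*}
since $KL\mu_n\to t$. The annealed hypothesis~(I) is what allows me to cash in the ergodic average of the quenched $L$-block no-visit probability against the annealed expectation $1-L\mu_n$ uniformly over $n$.

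\textbf{Main obstacle.} I expect the technical crux to be controlling $\mathcal{R}_\omega(n)$ $\PP$-a.s. Since $K\asymp\mu_n^{-1}\asymp e^{hn}$ is exponentially large in $n$, while $C\in L^p(\PP)$ only for some $p\in(0,1]$, one cannot bound $\max_{0\le k<K}C(\theta^{kL}\omega)$ by elementary means; a Borel--Cantelli argument based on Markov's inequality instead yields $\max_{k<K}C(\theta^{kL}\omega)\le K^{(1+o(1))/p}$ $\PP$-a.s., hence $\mathcal{R}_\omega(n)\lesssim K^{1+1/p+o(1)}\alpha(g)+Kg\mu_n$. Demanding that both pieces vanish, jointly with the constraints $g\mu_n\to 0$ and $g=o(n)$ inherited from the short-return lemma (itself driven by~(II) with rate $h_0$), forces precisely the quantitative assumption $\alpha(g)g^q\to 0$ with $q>\tfrac{h}{h_0}(1+\tfrac{3}{p})$ that appears in the hypotheses (with the extra factor $3/p$ arising from summing three error terms of the same integrability class). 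Balancing these three scales---the exponential rate $h$ of typical cylinders, the rate $h_0$ in~(II), and the integrability exponent $p$ of $C$---uniformly for $\PP$-a.e.\ $\omega$ is where the bulk of the work will lie.
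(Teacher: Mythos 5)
Your decoupling skeleton is reasonable and close in spirit to the paper's (which iterates a one-step version of your fundamental inequality and only introduces the gap $g$ inside the error term), but the step ``$\mu_{\theta^{kL}\omega}(R_n>L)=1-L\mu_n+o(\mu_n/K)$ for $\PP$-typical $\omega$'' is a genuine gap. The quenched block probability is governed by $\sum_{j}\mu_{\theta^{kL+j}\omega}(C^n(\un{y}))$, which fluctuates around the annealed value $L\mu_n$; neither (I) nor (III) gives a per-block estimate matching the annealed mean to within $o(\mu_n/K)=o(L\mu_n^2)$, and such an individual estimate is essentially false. What is true, and what the argument actually needs, is that the \emph{sum} $M_n(\omega)=\sum_{i\le k_n}\mu_{\theta^i\omega}(C^n(\un{y}))$ concentrates around its mean $k_n\mu(C^n(\un{y}))\to t$. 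Proving this requires a second-moment computation: split pairs $(i,j)$ near and far from the diagonal, use (II) near the diagonal and (I)+(III) far from it, truncate on the event $\sum_i C(\theta^i\omega)\le k_n^{3/p}$ (whose complement is controlled by Markov's inequality since $C\in L^p$ only), and conclude by Borel--Cantelli. This variance argument is the technical core of the proof and is absent from your proposal; the exponent $3/p$ comes from this truncation, not from ``summing three error terms.''

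Second, the constraint $g=o(n)$ is incompatible with your own remainder bound. You need $K^{1+1/p+o(1)}\alpha(g)\to0$ with $K\asymp e^{hn}$, but $\alpha$ is only assumed to decay polynomially ($\alpha(g)g^q\to0$); with $g=o(n)$ the quantity $\alpha(g)$ is only polynomially small in $n$ and cannot beat the exponentially large prefactor. The gap must be exponentially large in $n$ (the paper takes $g_n=\lfloor e^{h_0 n/4}\rfloor$, which is where the condition $q>\frac{h}{h_0}(1+\frac3p)$ is cashed in), and then the claim $\sigma^{-j}C^n(\un{y})\cap C^n(\un{y})=\emptyset$ for all $j\le g$ is simply false for $j\ge n$: in a mixing subshift these intersections typically have measure of order $\mu(C^n(\un{y}))^2$ and there are exponentially many of them. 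Hence the short-return term $k_n\,\mu(C^n(\un{y})\cap\{R(\cdot,C^n(\un{y}))\le g\})$ cannot be dismissed by non-periodicity of the word $y_0\dots y_{n-1}$; it must be estimated. The paper does this via the Ornstein--Weiss return-time theorem combined with a Lebesgue density point argument valid for $\mu$-a.e. $\un{y}$, and then upgrades the resulting bound on $\EE(G_n)$ to $\PP$-a.s. convergence by a further variance estimate. Both ingredients are missing from your proposal, so as written the argument does not close.
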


This can be view as a \emph{quenched} exponential law for hitting time. We provide some applications in Section~\ref{s:examples}, while a similar result is obtained for random endomorphisms with some rapidly mixing conditions in Section~\ref{sec:exp}.

The later convergence together with integration over $\Omega$ and dominated convergence theorem yields the
following \emph{annealed} version:

\begin{corollary} 
Under the same hypothesis, for $\mu$-almost every $\un{y}$ and $ t\ge0$,
\begin{equation*}\label{eq:cvinlaw2}
\mu\left(\un{x}\in X\colon R_n(\un{x},\un{y})>\frac{t}{\mu(C^n(\un{y}))}\right)\to e^{-t},
	\; \text{ as } n\to\infty.
\end{equation*}
\end{corollary}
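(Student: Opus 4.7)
The plan is to deduce the annealed statement directly from Theorem~\ref{thm:main1} by integrating over $\Omega$ and applying the dominated convergence theorem. By the disintegration $d\nu(\omega,\un{x}) = d\mu_\omega(\un{x})\,d\PP(\omega)$ and the fact that $\mu = \int \mu_\omega\,d\PP$ is the marginal of $\nu$ on $X$, we have for every fixed $\un{y}$, $n$ and $t$
\begin{equation*}
\mu\!\left(\un{x}\in X\colon R_n(\un{x},\un{y})>\tfrac{t}{\mu(C^n(\un{y}))}\right)
=\int_\Omega \mu_\omega\!\left(\un{x}\in X\colon R_n(\un{x},\un{y})>\tfrac{t}{\mu(C^n(\un{y}))}\right)d\PP(\omega).
\end{equation*}

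Next, I would fix a point $\un{y}$ in the full $\mu$-measure set provided by Theorem~\ref{thm:main1} and a value $t\ge 0$. For such $\un{y}$ there is a $\PP$-full measure set $\Omega_{\un{y}}\subset\Omega$ on which the integrand converges to $e^{-t}$ as $n\to\infty$. The integrand is trivially bounded by the constant function $1$, which is $\PP$-integrable since $\PP$ is a probability measure. Hence Lebesgue's dominated convergence theorem applies and yields
\begin{equation*}
\lim_{n\to\infty}\int_\Omega \mu_\omega\!\left(\un{x}\in X\colon R_n(\un{x},\un{y})>\tfrac{t}{\mu(C^n(\un{y}))}\right)d\PP(\omega)=\int_\Omega e^{-t}\,d\PP(\omega)=e^{-t},
\end{equation*}
which is exactly the desired conclusion.

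There is essentially no obstacle here: all the real work was already carried out in Theorem~\ref{thm:main1}, and the corollary is a standard quenched-to-annealed passage. The only point deserving a moment of care is that the exceptional set of $\omega$ on which the quenched convergence fails could a priori depend on $t$; but since we only need the annealed convergence separately for each fixed $t\ge 0$, this dependence is harmless, and we may apply dominated convergence for each $t$ individually.
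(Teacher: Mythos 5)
Your proof is correct and follows exactly the route the paper indicates: the corollary is obtained from Theorem~\ref{thm:main1} by integrating the quenched convergence over $\Omega$ and applying the dominated convergence theorem with the trivial bound $1$. Nothing further is needed.
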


It is natural to conjecture that the convergence in distribution in the theorem holds almost everywhere 
with respect to  the measure $\nu$, that is:

\begin{conjecture}
For $\nu$-a.e. $(\omega,\un{y})$ the convergence \eqref{eq:cvinlaw} in the theorem holds.
\end{conjecture}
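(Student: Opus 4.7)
The goal is to upgrade the conclusion of Theorem~\ref{thm:main1} from a statement that is true on a $\mu\otimes\PP$-full measure subset to one that is true on a $\nu$-full measure subset of $\E$. Note that this upgrade is \emph{not} formal: since the sample measures $\mu_\omega$ are typically mutually singular and singular with respect to the marginal $\mu$, the set of ``good'' $\un{y}$ produced by Theorem~\ref{thm:main1} need not meet every fiber $\E_\omega$ in a set of full $\mu_\omega$-measure. The plan is therefore to inspect the proof of Theorem~\ref{thm:main1} and verify that every genericity assumption on $\un{y}$ can be arranged to hold for $\nu$-almost every $(\omega,\un{y})$.

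First, I would isolate the properties of $\un{y}$ that the proof of Theorem~\ref{thm:main1} actually uses. Typically these are: (a) a fiberwise Shannon--McMillan bound $-\frac{1}{n}\log\mu_\omega(C^n(\un{y}))\to h$; (b) the comparability $\mu(C^n(\un{y}))\asymp \mu_\omega(C^n(\un{y}))$ up to sub-exponential factors; (c) short-return bounds ruling out that the orbit of $\un{y}$ re-enters $C^n(\un{y})$ before a time of order $\mu(C^n(\un{y}))^{-1+\eta}$; and (d) typicality for the Birkhoff averages of the observables built from the correlation kernel.

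Next, each of (a)--(d) must be verified to hold on a $\nu$-full measure subset of $\E$. Property (a) follows from the random Shannon--McMillan--Breiman theorem applied to the skew-product $(S,\nu)$ and the partition $\Omega\times \F_0^1$. Property (d) is the pointwise ergodic theorem for $(S,\nu)$. Properties (b) and (c) should be established by a Borel--Cantelli argument: hypothesis (III), combined with the integrability $C\in L^p$ and the bound $q>\tfrac{h}{h_0}(1+\tfrac{3}{p})$, controls the $\mu_\omega$-measure of the bad events along $\nu$-typical $(\omega,\un{y})$; for (b) one compares $\mu_\omega(C^n(\un{y}))$ with $\mu(C^n(\un{y}))=\int \mu_{\omega'}(C^n(\un{y}))\,d\PP(\omega')$ using the fibered decay of correlations (III) and the polynomial marginal decay (I). One finally re-runs the proof of Theorem~\ref{thm:main1} with $\un{y}$ drawn from the $\nu$-full measure set obtained above, replacing every application of the marginal decay of correlations (I) (which was natively suited to $\mu$-typical $\un{y}$) by its fibered counterpart (III), which is well-adapted to $\mu_\omega$-typical $\un{y}$.

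The main obstacle will be step (b), namely the comparison between $\mu(C^n(\un{y}))$ and $\mu_\omega(C^n(\un{y}))$ for $\nu$-typical $(\omega,\un{y})$. The quantity $\mu(C^n(\un{y}))$ enters the statement through the normalization $t/\mu(C^n(\un{y}))$, and the marginal decay (I) does not directly give us handle on this for $\mu_\omega$-typical $\un{y}$ (as opposed to $\mu$-typical $\un{y}$). Concretely, one needs the $\nu$-a.s.\ conclusion
\[
\frac{\mu(C^n(\un{y}))}{\mu_\omega(C^n(\un{y}))}\longrightarrow 1,
\]
or at least that this ratio is eventually bounded, which should follow from (III) applied to a suitable sequence of cylinders via a Borel--Cantelli argument, using that (II) gives exponential decay of $\mu_\omega(C^n(\un{y}))$ and that $\alpha(g)g^q\to 0$ with $q$ large. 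If these quantitative estimates close the argument, the conjecture follows; otherwise one might first need to prove the weaker statement where the normalization $\mu(C^n(\un{y}))$ is replaced by $\mu_\omega(C^n(\un{y}))$ and only then recover the form stated in the conjecture.
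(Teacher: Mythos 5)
You should first be aware that the paper does not prove this statement: it is explicitly left as an open conjecture, and the authors even stress that, despite appearances, the conjecture and Theorem~\ref{thm:main1} ``are not comparable''. Your proposal is in any case a plan rather than a proof (several steps are left at the level of ``should follow'' and ``if these estimates close the argument''), and its one concrete mechanism fails. The comparability you propose in step (b) --- that $\mu(C^n(\un{y}))/\mu_\omega(C^n(\un{y}))\to 1$ for $\nu$-a.e.\ $(\omega,\un{y})$, or at least that the ratio stays bounded --- is refuted by the paper's own Example~\ref{ex12}. There $\mu$ is the $(1/2,1/2)$ Bernoulli measure while $\mu_\omega(C^n(\un{y}))/\mu(C^n(\un{y}))=p^{k_n}q^{n-k_n}2^n$ with $p\neq 1/2$; for $\nu$-typical $(\omega,\un{y})$ one has $k_n/n\to p$, so the ratio grows like $\left(2p^pq^{1-p}\right)^n\to\infty$ exponentially fast, and in general the only possible limit values are $0$ and $\infty$. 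The sample measures are mutually singular with $\mu$, so no Borel--Cantelli argument based on (I)--(III) can deliver the comparability you need, and the fallback of first proving the law with normalization $\mu_\omega(C^n(\un{y}))$ and then ``recovering'' the stated form runs into exactly the same wall.

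A second point worth noting is that the normalization by $\mu(C^n(\un{y}))$ is not, by itself, the obstruction: the first-moment computation \eqref{eq:EMAt} and Lemma~\ref{lem:MN} show that $\sum_{i\le k_n}\mu_{\theta^i\omega}(C^n(\un{y}))\to t$ precisely because one averages the fiber measures along the orbit of $\omega$, which by stationarity reproduces $\mu$. The genuine difficulty, which you gesture at in your opening paragraph but do not resolve, is that every genericity condition the paper imposes on $\un{y}$ --- membership in $X'$, being a Lebesgue density point of $D(n_0)$ in Lemma~\ref{lem:EGN}, the second-moment estimates using (I) at the cylinder $C^n(\un{y})$ --- is a $\mu$-full-measure condition, and a $\mu$-full set may have zero $\mu_\omega$-measure for a.e.\ $\omega$ when $\mu_\omega\perp\mu$. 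Replacing (I) by (III) does not fix this, because the density-point and Ornstein--Weiss arguments are intrinsically about $\mu$. Without a new idea for transferring these conditions to $\mu_\omega$-typical $\un{y}$, the argument does not close; as far as this paper is concerned, the statement remains open.
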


Despite the strong similarity of the conjecture with the theorem, these two statements are not comparable. 
In particular, the corollary would not follow from the conjecture, since even in the simple case of a random Bernoulli measure the sample measures $\mu_\omega$ and $\mu$ could well be mutually singular; see Example~\ref{ex12} for details.

We now provide a similar result for a class of maps satisfying some decay of correlations reminiscent of expanding maps.
Let $(\Omega,\theta,\PP)$ be an invertible ergodic measure preserving transformation, 
$X_\omega\subset X$ be subsets of a compact metric space $X$, 
let $f_\omega: X_\omega \to X_{\theta(\omega)}$ be a bimeasurable map and consider the associated random 
dynamical system described by the skew-product $S : \E \to \E$ given by
$ S(\omega,x)=(\theta(\omega), f_\omega(x))$.
As before, let $\nu$ be an $S$-invariant probability measure with marginal $\PP$ on $\Omega$, 
let $(\mu_\omega)_\omega$ denote its decomposition  and let $\mu=\int \mu_\omega \, d\PP$ be the marginal 
of $\nu$ on $X$. Given $k\ge 1$ and $\omega\in\Omega$ we shall use the notation
$f_\omega^k:=f_{\theta^{k-1}(\omega)} \circ \dots \circ f_{\theta(\omega)}\circ f_\omega$. Moreover,
given a measurable deterministic set $A\subset X$
we write $\tau_A^\omega(x)$ the first $k$ such that $f_\omega^k(x)\in A$.

Replace (I) by (I'): There exists $\gamma(\ell)$ going to zero faster than any power of $\ell$ such that: for $\varphi$ Lipschitz on $X$ and $\psi$ measurable bounded on $X$
\[
\left|\int_\Omega\int_X \varphi \, (\psi\circ f_\omega^\ell) d\mu_\omega d\PP(\omega)-\int_X\varphi d\mu\int_X\psi 
d\mu\right|
\le C_0 \gamma(\ell) Lip(\varphi)\sup|\psi|.
\]

Replace assumption (II) by (II') : it exists $d_0>0$ such that 
$$
\underline{d}_\mu(x):=\liminf_{r\to 0} \frac{\log \mu(B(x,r))}{\log r} >d_0
$$
$\nu$-almost everywhere and $\mu_\omega(B(x,r))\le c r^{d_0}$ for all $x,r,\omega$.

Replace assumption (III) by (III'): for $\varphi$ Lipschitz on $X$ and $\psi$ measurable bounded on $X$
\[
\left|\int_X \varphi \, (\psi\circ f_\omega^\ell) \, d\mu_\omega-\int_X\varphi \, d\mu_\omega\int_X\psi \, 
d\mu_{\theta^\ell\omega}\right|
\le C(\omega) \gamma(\ell) Lip(\varphi)\sup|\psi|.
\]

Include assumption (IV'):  there are constants $a,b>0$ such that for all $x$ and $r,\rho>0$ it holds
 $\mu(B(x,r+\rho))\le \mu(B(x,r))+r^{-b}\rho^a$.
 
Include assumption (V'): the system is random-aperiodic, i.e. 
\[\nu\{(\omega,x)\in\E:\exists n\in \mathbb{N},\,f^n_{\omega}x=x\}=0.\]

\begin{theorem}\label{thm:main2}
If the random dynamical system satisfies (I')-(V') then for $\mu$-a.e. $y\in X$, there 
exists random variables $\Delta_r$ defined on $\Omega$ such that $\Delta_r\to0$ in probability and 
\[
\sup_{t\ge0} \left|\mu_\omega\left(x\in X\colon \tau_{B(y,r)}^\omega(x)>t/\mu(B(y,r))\right)-e^{-t}\right| \le \Delta_r(\omega).
\]
\end{theorem}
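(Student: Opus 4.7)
The plan is to adapt the block-decomposition (Galves--Schmitt style) argument used for Theorem~\ref{thm:main1} to the continuous setting of random expanding maps, replacing cylinders by metric balls and using Lipschitz approximations to access the decay-of-correlations hypotheses (I') and (III'). Fix a generic $y\in X$, set $A_r=B(y,r)$ and $T=t/\mu(A_r)$. We want to compare $\mu_\omega(\tau_{A_r}^\omega>T)$ with $e^{-t}$ uniformly in $t\ge 0$.

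The first step is to sandwich $\mathbf{1}_{A_r}$ between two Lipschitz functions $\varphi_r^-\le \mathbf{1}_{A_r}\le\varphi_r^+$ supported on $B(y,r-\rho)$ and $B(y,r+\rho)$ with Lipschitz constant $1/\rho$; hypothesis (IV') controls the measure of the annulus by $r^{-b}\rho^a$, bounding every error coming from swapping an indicator for its Lipschitz approximant. The auxiliary scale $\rho=\rho(r)$ will be optimized at the end. Next, write $T\approx q(L+g)$ where $g$ is a gap length and $L$ a block length, and iterate (III') applied to the Lipschitz proxies to obtain
\[
\Bigl|\mu_\omega(\tau_{A_r}^\omega>T) - \prod_{j=0}^{q-1}\mu_{\theta^{j(L+g)}\omega}\bigl(\tau_{A_r}^{\theta^{j(L+g)}\omega}>L\bigr)\Bigr|
\le q\,C(\omega)\,\gamma(g)\,\rho^{-1} + q L\,r^{-b}\rho^a.
\]

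For a single block one uses the union bound $\mu_{\omega'}(\tau_{A_r}^{\omega'}\le L)\le \sum_{k=1}^L\mu_{\omega'}(f_{\omega'}^{-k}A_r)$ and invokes (I') (after integrating in $\omega$) to replace the sum of sample probabilities by $L\mu(A_r)$, up to an error controlled by $\gamma$ and the Lipschitz approximation. The short-return part $k\le k_0$ is handled by invoking (V') (random-aperiodicity) together with (II') (the pointwise lower bound $d_0$ on the dimension) to show that for $\mu$-a.e.~$y$ the annealed mass of $A_r\cap f_\omega^{-k}A_r$ is negligible compared with $\mu(A_r)$ provided $r^{d_0}$ beats the short window, exactly as in the short-return lemma used for Theorem~\ref{thm:main1}. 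This yields $\mu_{\omega'}(\tau_{A_r}^{\omega'}>L)\approx 1-L\mu(A_r)$, hence $\prod_j(1-L\mu(A_r))\to e^{-t}$ since $qL\mu(A_r)\to t$.

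The main obstacle is showing that all errors assemble into a single $\Delta_r(\omega)$ converging to zero \emph{in probability}. The decay-of-correlations term $qC(\omega)\gamma(g)\rho^{-1}$ is made small outside a set of small $\PP$-measure by a Markov inequality using $C\in L^p$, exploiting the super-polynomial decay of $\gamma$ to absorb polynomial powers of $1/\rho$, $1/\mu(A_r)$ and $r^{-b}$ coming from (IV'). The averaging error, coming from the replacement of the sample probabilities along $\{\theta^{j(L+g)}\omega\}_j$ by the marginal $\mu$, is controlled by a Birkhoff-type argument applied to $\theta^{L+g}$; because the sparse sampling and the dependence on $r$ of the test function prevent almost-sure convergence, one only gets convergence in probability, which is precisely why the theorem is stated in that form. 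Once the parameters $g,L,\rho$ are tuned as functions of $r$ (e.g.\ $\rho=r^{1+\eta}$ and $g=(\log 1/r)^{\kappa}$ for appropriate $\eta,\kappa$), the estimates are uniform in $t\ge 0$ after using (IV') once more to pass from pointwise to supremum convergence, yielding the desired $\Delta_r(\omega)$.
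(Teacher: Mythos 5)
Your overall architecture is not the one the paper uses, and the key step of your architecture does not go through under the stated hypotheses. You propose a Galves--Schmitt block decomposition: cut $[0,T]$ into $q$ blocks of length $L$ separated by gaps $g$ and iterate (III') to factor $\mu_\omega(\tau^\omega_{A_r}>T)$ into $\prod_j\mu_{\theta^{j(L+g)}\omega}(\tau>L)$. Each such factorization step requires applying (III') with $\varphi$ equal to (a Lipschitz approximation of) the indicator of the event ``no entry into $A_r$ during the block'', i.e.\ of $\bigcap_{k=1}^{L}(f_\omega^{k})^{-1}A_r^c$. But (III') is asymmetric: only the observable on the past side must be Lipschitz, and the error is proportional to $\Lip(\varphi)$. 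The boundary of the block event is $\bigcup_k (f_\omega^k)^{-1}(\partial A_r)$, so a Lipschitz approximant with error controlled in measure needs Lipschitz constant of order $\Lambda^{L}/\rho$ ($\Lambda$ the expansion rate), which is super-exponential in $1/r$ for any useful choice of $L$; moreover (IV') only controls annuli of metric balls, not of dynamical preimages of balls. Since $\gamma$ is only assumed super-polynomial, the term $\gamma(g)\Lip(\varphi)$ cannot be made small. This is precisely why the paper does \emph{not} block: it uses the one-step peeling of Lemma~\ref{lem:delta}, which gives the exact identity $\mu_\omega(\tau>i+1)=\mu_{\theta\omega}(\tau>i)-\mu_{\theta\omega}(A\cap\{\tau>i\})$ and hence the product $\prod_{i=1}^{k}(1-\mu_{\theta^i\omega}(A))$ over \emph{every} time step, with an error $\sum_i\delta_{\theta^i\omega}(A)$ (Lemma~\ref{lem:somme}) in which the ball indicator always sits on the Lipschitz side and the complicated set $\{\tau>j\}$ on the bounded-measurable side --- exactly matching the form of (III'). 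The paper's closing remark in Section~\ref{sec:exp} confirms the point: even the much smaller set $B(x,r)\cap\{\tau\le g_r\}$ cannot be handled without strengthening (I') and (III') to ``dynamically Lipschitz'' test functions.

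Two further steps are underpowered as stated. First, replacing the sampled probabilities $\mu_{\theta^{i}\omega}(A_r)$ by $\mu(A_r)$ is not achieved by ``a Birkhoff-type argument applied to $\theta^{L+g}$'': the observable depends on $r$, so no ergodic theorem applies; the paper instead proves $M_r=\sum_{i\le k_r}\mu_{\theta^i\omega}(A_r)\to t$ in probability by an explicit second-moment computation (Lemma~\ref{lem:MNadapt}), splitting near- and far-from-diagonal pairs, using (II') near the diagonal, (I')/(III')/(IV') far from it, and a Markov bound exploiting $C\in L^p$. This variance estimate is where the ``in probability'' (rather than a.s.) conclusion genuinely originates. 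Second, your short-return step needs more than (II') and (V'): the paper invokes the quenched recurrence-rate theorem of Marie--Rousseau (which uses (I'), (II') and (V') together) and then a Lebesgue density-point argument with respect to the disintegration $\nu_y$ of $\nu$ over $X$ (Lemma~\ref{lem:EGNr}) to show $\EE(G_r)/k_r=o(\mu(B(x,r)))$ for $\mu$-a.e.\ $x$. As written, your proof would need to be rebuilt on the decomposition of Section~\ref{sec:gal} rather than on blocking.
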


\begin{remark}
The method does not give the convergence almost surely in $\omega\in \Omega$ as in the previous section.
We recall, however, that the convergence in probability of $\Delta_r$ implies that a.s. $\omega$ there exists a sequence
$r_n^\omega\to0$ such that $\Delta_{r_n^\omega}(\omega)\to0$ as $n\to\infty$.
\end{remark}

The question of the speed of convergence could be aborded in some situations. 
A quite interesting question is also to understand if the presence of exponential law for return times implies
that the fluctuations of  repetition times and empirical entropies do coincide.

\section{Estimates for general random systems and sets}\label{sec:gal}

In this section we describe general results that will be used in the proofs of our main results, and whose strategy follows the line of \cite{hsv}. They are valid for any random dynamical system $f_\omega$ acting on $X$,
where $\theta$ preserves the probability $\PP$ on $\Omega$. Consider
\[
\delta_\omega(A)
	=\sup_{j\ge 1} \left|\mu_\omega(\tau_A^\omega(\cdot)>j)\,\mu_\omega(A) - \mu_\omega(A\cap \{\tau_A^\omega(\cdot)>j\})\right|.
\]
Since $\theta$ is invertible, by $\sigma$-invariance of $\nu$ and almost everywhere uniqueness of the 
decomposition $\nu=\int \mu_\omega \, d\mathbb{P}(\omega) $ we get that the set $$\Omega' = \{\omega\in\Omega\colon \forall i,\ (f_\omega^i)_*\mu_\omega=\mu_{\theta^i\omega}\}$$ has full $\mathbb P$-probability.

\begin{lemma} \label{lem:delta}
For all $\omega\in\Omega'$, integer $k$ and measurable $A\subset X$ we have
\[
\left|\mu_\omega(\tau_A^\omega(\cdot)>k)-\prod_{i=1}^k\left(1-\mu_{\theta^i\omega}(A)\right)\right |
\le
\sum_{i=1}^{k}\delta_{\theta^i\omega}(A)\prod_{j=1}^{i-1} \left(1-\mu_{\theta^j\omega}(A)\right).
\]
\end{lemma}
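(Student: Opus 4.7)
The plan is to establish a one-step recursion for $a_k(\omega):=\mu_\omega(\tau_A^\omega(\cdot)>k)$ relating it to $a_{k-1}(\theta\omega)$, and compare it with the corresponding exact recursion satisfied by $b_k(\omega):=\prod_{i=1}^k(1-\mu_{\theta^i\omega}(A))$. Since $f_\omega^i = f_{\theta\omega}^{i-1}\circ f_\omega$, a direct unfolding of the condition $f_\omega^i(x)\notin A$ for $i=1,\dots,k$ gives
\[
\{\tau_A^\omega>k\} \;=\; f_\omega^{-1}\bigl(A^c\cap\{\tau_A^{\theta\omega}>k-1\}\bigr).
\]
Using $\omega\in\Omega'$, which ensures $(f_\omega)_*\mu_\omega = \mu_{\theta\omega}$, this yields
\[
a_k(\omega) \;=\; \mu_{\theta\omega}(\tau_A^{\theta\omega}>k-1) \;-\; \mu_{\theta\omega}\bigl(A\cap\{\tau_A^{\theta\omega}>k-1\}\bigr).
\]

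For $k=1$ the right-hand side equals $1-\mu_{\theta\omega}(A)=b_1(\omega)$, giving the base case exactly. For $k\ge 2$, the definition of $\delta_{\theta\omega}(A)$ applied with $j=k-1\ge 1$ allows one to replace $\mu_{\theta\omega}(A\cap\{\tau_A^{\theta\omega}>k-1\})$ by $\mu_{\theta\omega}(A)\,a_{k-1}(\theta\omega)$ up to an error $r_k(\omega)$ with $|r_k(\omega)|\le\delta_{\theta\omega}(A)$. This produces the recursion
\[
a_k(\omega) \;=\; \bigl(1-\mu_{\theta\omega}(A)\bigr)\,a_{k-1}(\theta\omega) + r_k(\omega),
\]
to be compared with the trivial identity $b_k(\omega) = (1-\mu_{\theta\omega}(A))\,b_{k-1}(\theta\omega)$.

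Subtracting and taking absolute values, one gets $|a_k-b_k|(\omega)\le (1-\mu_{\theta\omega}(A))\,|a_{k-1}-b_{k-1}|(\theta\omega)+\delta_{\theta\omega}(A)$, and a straightforward induction on $k$, after the reindexing $\theta^j(\theta\omega)=\theta^{j+1}\omega$, telescopes this into the desired bound $\sum_{i=1}^k\delta_{\theta^i\omega}(A)\prod_{j=1}^{i-1}(1-\mu_{\theta^j\omega}(A))$, where the $i=1$ contribution corresponds to an empty product and absorbs the last $\delta_{\theta\omega}(A)$. There is essentially no obstacle beyond careful index bookkeeping; the only point to watch is that the supremum defining $\delta$ is taken over $j\ge 1$, which is precisely why the step $k=1$ is handled separately and contributes no $\delta$-error.
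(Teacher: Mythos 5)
Your proof is correct and follows essentially the same route as the paper: the same one-step identity $\{\tau_A^\omega>k\}=f_\omega^{-1}(A^c\cap\{\tau_A^{\theta\omega}>k-1\})$, the pushforward property of $\Omega'$, the definition of $\delta$ to produce the recursion $a_k(\omega)=(1-\mu_{\theta\omega}(A))a_{k-1}(\theta\omega)+r_k(\omega)$, and recursive substitution. Your explicit handling of the $k=1$ base case and the reindexing is just a more detailed write-up of the paper's ``immediate recursive substitution argument.''
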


\begin{proof}
For any integer $i\ge 1$ we have
\[
\begin{split}
\mu_\omega(\tau_A^\omega(\cdot)>i+1)
&=
\mu_\omega(  f_\omega^{-1}(A^c\cap \{\tau_A^{\theta\omega}(\cdot)>i\})) \\
&=
\mu_{\theta\omega}(\tau_A^{\theta\omega}(\cdot)>i)-\mu_{\theta\omega}(A\cap\{\tau_A^{\theta\omega}(\cdot)>i)\}).
\end{split}
\]
Therefore
$
\left|
\mu_\omega(\tau_A^\omega(\cdot)>i+1) - \left(1-\mu_{\theta\omega}(A)\right)\mu_{\theta\omega}(\tau_A^{\theta\omega}(\cdot)>i) 
\right|
\le
\delta_{\theta\omega}(A).
$
An immediate recursive substitution argument finishes the proof of the lemma.
\end{proof}

The proof of Theorem~\ref{thm:main1} is based on the previous lemma. 
The strategy is to prove that the term $\prod_{i=1}^{k}\left(1-\mu_{\theta^i\omega}(A)\right)$  is almost 
surely convergent to $e^{-t}$, and that the error term in the right hand side goes to zero almost surely.

Since the exponential distribution is continuous, the convergence \eqref{eq:cvinlaw} for any $t\ge0$ is equivalent to the convergence for a countable dense set of $t$'s.
Henceforth, to establish the theorems it is sufficient to show that for any $t>0$
we have the convergence $\PP$-almost surely.

Let then $t>0$ be fixed. Given $A\subset X$  let  $k=k_{A,t}=\lfloor t/\mu(A)\rfloor$ 
and define 
\[
M_{A,t}(\omega)=\sum_{i=1}^{k_{A,t}}\mu_{\theta^i\omega}(A).
\]

\begin{lemma}\label{cor:conv}
We have the approximation as $\sup_\omega\mu_\omega(A)\to0$ 
\[
\prod_{i=1}^{k_{A,t}}\left(1-\mu_{\theta^i\omega}(A)\right)  - e^{-M_{A,t}(\omega)} \to 0.
\]
\end{lemma}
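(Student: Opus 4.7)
The plan is to prove a sharp deterministic estimate
\[
0 \le e^{-M_{A,t}(\omega)} - \prod_{i=1}^{k_{A,t}}\left(1-\mu_{\theta^i\omega}(A)\right) \le \varepsilon\, M_{A,t}(\omega),
\]
where $\varepsilon := \sup_\omega \mu_\omega(A)$, and then to conclude that this tends to $0$ as $\varepsilon \to 0$, since $M_{A,t}(\omega)$ stays bounded (indeed it converges to $t$, as will be shown by a Birkhoff-type argument in the subsequent subsections applied to $\omega\mapsto\mu_\omega(A)$).

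First, I would set $p_i := \mu_{\theta^i\omega}(A)$, so that $0\le p_i\le\varepsilon$ and $M_{A,t}(\omega)=\sum_{i=1}^{k_{A,t}} p_i$. Once $\varepsilon\le 1/2$, the elementary inequalities $-p-p^2\le\log(1-p)\le -p$ hold for each $p_i$. Summing over $i$ gives
\[
-M_{A,t}(\omega)-\sum_{i=1}^{k_{A,t}}p_i^2 \;\le\; \sum_{i=1}^{k_{A,t}}\log(1-p_i)\;\le\; -M_{A,t}(\omega),
\]
and $\sum p_i^2\le\varepsilon M_{A,t}(\omega)$. Exponentiating the upper inequality shows $\prod(1-p_i)\le e^{-M_{A,t}(\omega)}$; exponentiating the lower one and using $e^{-x}\ge 1-x$ yields
\[
\prod_{i=1}^{k_{A,t}}(1-p_i)\;\ge\; e^{-M_{A,t}(\omega)}e^{-\sum p_i^2}\;\ge\; e^{-M_{A,t}(\omega)}\bigl(1-\varepsilon M_{A,t}(\omega)\bigr),
\]
which rearranges exactly to the displayed bound. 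Alternatively one can just use that $x\mapsto e^x$ is $1$-Lipschitz on $(-\infty,0]$ to pass from the log comparison to the product comparison; both routes give the same estimate.

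Finally, since $M_{A,t}(\omega)$ is controlled almost surely (and $\varepsilon\to 0$ by hypothesis), the right-hand side $\varepsilon M_{A,t}(\omega)$ tends to $0$, proving the lemma. I do not foresee any real obstacle here: the content is entirely elementary analysis. The only point requiring care is that $\varepsilon M_{A,t}(\omega)\to 0$ genuinely needs $M_{A,t}(\omega)$ not to blow up — this is not part of the present lemma but is an ergodic-type statement to be used together with it in the main convergence argument.
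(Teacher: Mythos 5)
Your argument is correct and is essentially the paper's own proof: the paper's ``instrumental result'' is the inequality $\exp\left(-(1+2\varepsilon)\sum_i x_i\right)\le\prod_i(1-x_i)\le\exp\left(-(1-2\varepsilon)\sum_i x_i\right)$ for $x_i\in[0,\varepsilon]$ with $\varepsilon\le 1/2$, which is exactly your comparison of $\sum_i\log(1-x_i)$ with $-\sum_i x_i$ up to a multiplicative error of order $\varepsilon$. The only remark worth making is that your own intermediate estimate $e^{-M_{A,t}(\omega)}-\prod_i(1-x_i)\le \varepsilon\, M_{A,t}(\omega)\,e^{-M_{A,t}(\omega)}\le\varepsilon/e$ already gives the convergence uniformly in $\omega$, so the closing appeal to the boundedness of $M_{A,t}(\omega)$ (which is only established later, in Lemma~\ref{lem:MN}, and not uniformly in $\omega$) can be dispensed with.
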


\begin{proof}
This result is a consequence of the following simple and instrumental result: 
if $0<\eps\le 1/2$ and $x_1,\ldots,x_k\in[0,\eps]$ then 
\[
\exp\left(-(1+2\eps)\sum_{i=1}^k x_i\right) \le \prod_{i=1}^k(1-x_i) \le \exp\left(-(1-2\eps)\sum_{i=1}^k x_i\right). 
\]
This finishes the proof.
\end{proof}

Observe that by stationarity the expectation of $M_{A,t}$ is
\begin{equation}\label{eq:EMAt}
\EE(M_{A,t})=\int_\Omega \sum_{i=1}^{k_{A,t}}\; \mu_{\theta^i\omega}(A) \; d\PP(\omega) = k_{A,t}\mu(A),
\end{equation}
which by definition of $k_{A,t}$ already shows that $\EE(M_{A,t})\to t$ as $\mu(A)\to0$.

Next, the error term $\sum_{i=1}^k\delta_{\theta^i\omega}(A)$ in Lemma~\ref{lem:delta} decomposes 
as a mixing term and short entrance or return time terms as follows. 
Let $g\le k$ be an integer and set
\[
\begin{split} 
G_{A,k,g}(\omega) &= \sum_{i=1}^k \mu_{\theta^i\omega}(A\cap \{\tau_A^{\theta^i\omega}(\cdot)\le g\}),\\
H_{A,k,g}(\omega) &= \sum_{i=1}^k \sup_{j\ge 1} \left|\mu_{\theta^i\omega}(A\cap (f_\omega^{g})^{-1}\{\tau_A^{\theta^{i+g}\omega}(\cdot)>j\})-\mu_{\theta^i\omega}(A)\mu_{\theta^{i+g}\omega}(\tau_A^{\theta^{i+g}\omega}(\cdot)>j)\right|,\\
K_{A,k,g}(\omega) &= \sum_{i=1}^k \mu_{\theta^i\omega}(A) \, \mu_{\theta^i\omega}(\tau_A^{\theta^i\omega}(\cdot)\le g).
\end{split}
\]

The gap $g$ allows to exploit the mixing assumptions, related to $H_{A,k,g}$, provided that the 
probabilities of hitting or returning into $A$ before time $g$, related to $G_{A,k,g}$ and $K_{A,k,g}$, 
are small since the whole error term is estimated as follows.

\begin{lemma}\label{lem:somme}
For all $\omega\in\Omega'$, any measurable set $A\subset X$ and any integers $g\le k$ we have 
$\displaystyle \sum_{i=1}^k\delta_{\theta^i\omega}(A) \le G_{A,k,g}(\omega)+H_{A,k,g}(\omega)+K_{A,k,g}(\omega)$.
\end{lemma}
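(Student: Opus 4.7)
The plan is to prove the inequality \emph{termwise}, i.e.\ to show
\[
\delta_{\theta^i\omega}(A) \le G_i + H_i + K_i
\]
for each $i = 1,\ldots,k$, where $G_i$, $H_i$, $K_i$ denote the $i$-th summands of $G_{A,k,g}$, $H_{A,k,g}$, $K_{A,k,g}$, and then to sum. Fix $\omega \in \Omega'$ and write $\omega_i = \theta^i\omega$. The essential input from the definition of $\Omega'$ is the pushforward identity $(f_{\omega_i}^g)_*\mu_{\omega_i} = \mu_{\omega_{i+g}}$, which is what will let us move between fiber measures after iterating by the gap.

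The main step is the decomposition, valid for $j > g$,
\[
\{\tau_A^{\omega_i} > j\} = \{\tau_A^{\omega_i} > g\} \cap (f_{\omega_i}^g)^{-1}\{\tau_A^{\omega_{i+g}} > j - g\},
\]
which merely encodes that not hitting $A$ in the first $j$ iterates of the cocycle means not hitting it in the first $g$ and then not hitting it in the next $j - g$ starting on the fiber over $\omega_{i+g}$. Writing $\{\tau_A^{\omega_i} > g\} = X \setminus \{\tau_A^{\omega_i} \le g\}$ and intersecting with $A$ yields
\[
\bigl|\mu_{\omega_i}(A \cap \{\tau_A^{\omega_i} > j\}) - \mu_{\omega_i}(A \cap (f_{\omega_i}^g)^{-1}\{\tau_A^{\omega_{i+g}} > j - g\})\bigr| \le G_i,
\]
while the same splitting on the full space, followed by the pushforward identity, gives
\[
\bigl|\mu_{\omega_i}(A)\mu_{\omega_i}(\tau_A^{\omega_i} > j) - \mu_{\omega_i}(A)\mu_{\omega_{i+g}}(\tau_A^{\omega_{i+g}} > j - g)\bigr| \le K_i.
\]
Subtracting these two identities and taking the supremum over $j' = j - g \ge 1$ exposes the $i$-th summand $H_i$ of $H_{A,k,g}$ as the bound on the remaining ``mixing'' term, so $\delta_{\omega_i}(A) \le G_i + H_i + K_i$ when $j > g$.

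For $1 \le j \le g$ the mixing term is unnecessary: both $\mu_{\omega_i}(A \cap \{\tau_A^{\omega_i} > j\})$ and $\mu_{\omega_i}(A)\mu_{\omega_i}(\tau_A^{\omega_i} > j)$ differ from $\mu_{\omega_i}(A)$ by at most $G_i$ and $K_i$ respectively, so the triangle inequality gives the same bound $G_i + K_i \le G_i + H_i + K_i$. Taking the supremum over all $j \ge 1$ and then summing over $i=1,\ldots,k$ concludes. The only real obstacle is clerical: ensuring that the two gap errors appear with compatible signs so that they contribute additively rather than partially cancel in a way that degrades the estimate, and confirming that the supremum $\sup_{j' \ge 1}$ built into $H_{A,k,g}$ genuinely dominates what arises after the translation $j' = j - g$ for $j > g$.
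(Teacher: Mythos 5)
Your argument is correct and is essentially the paper's own proof: the same cocycle decomposition $\{\tau_A^{\omega_i}>j\}=\{\tau_A^{\omega_i}>g\}\cap (f_{\omega_i}^g)^{-1}\{\tau_A^{\omega_{i+g}}>j-g\}$, the same use of the pushforward identity from $\Omega'$, and the same termwise triangle-inequality bound by $G_i+H_i+K_i$ followed by summation over $i$. The ``clerical obstacle'' you worry about is vacuous, since the triangle inequality makes all three error terms contribute additively regardless of sign.
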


\begin{proof} We have
\begin{align*}
\delta_\omega(A) 
&= \sup_{j\ge 1} |\mu_\omega(\tau_A^\omega(\cdot)>j)\mu_\omega(A)-\mu_\omega(A\cap\{\tau_A^\omega(\cdot)>j\})|\\
&\le
\mu_\omega(\tau_A^\omega(\cdot)\le g) \, \mu_\omega(A) +\mu_\omega(A\cap \{\tau_A^\omega(\cdot)\le g\}) \\
& +\sup_{j\ge g} \left| \mu_{\theta^g\omega}(\tau_A^{\theta^g\omega}(\cdot)>j-g) \, \mu_\omega(A)
				-  \mu_\omega(A\cap (f_\omega^{g})^{-1}\{\tau_A^{\theta^g\omega}(\cdot)>j-g\}) \right|.
\end{align*}
Thus the lemma follows by summing up the the previous terms along the finite piece of orbit of $\omega$ by $\theta$. 
\end{proof}

To summarize, to prove that the limiting law is a.s. exponential we are led to prove that $M_{A,t}\to t$ and
$G_A$, $H_A$ and $K_A$ goes to zero as $A$ shrinks to a typical reference point.

\section{Proofs for the random subshifts}\label{sec:shift}

In this section we will prove Theorem~\ref{thm:main1} and the proof now follows the line of \cite{sau}.
Consider the set 
$$
X' = \left\{\un{y} \in X\colon \limsup_{n\to\infty} -\frac{1}{n}\log\mu(C^n(\un{y}))\le h\right\}.
$$
We already noticed that our hypothesis guarantee that the metric entropy $h_\nu(S,\Omega\times\F_0^1)$ is finite.
Therefore, by Shannon-McMillan-Breiman theorem (see \cite{zhu}) we obtain that 
$
\limsup_{n\to\infty} -\frac{1}{n}\log\mu_\omega(C^n(\un{y}))=h
$
for $\nu$-almost every $(\omega,\un{x})$.  Thus it follows from the Jensen's inequality that $\mu(X')=\nu(\Omega\times X')=1$.

We fix some $t>0$ and take $A=C_n(\un{y})$.
For simplicity we denote $M_{A,t}$ by $M_n$ and $k_{A,t}$ by $k_n$. 
We forget also the dependence on $\un{y}$, $g$ and $t$ for the other random variables 
introduced in the previous section and hence we write $G_n$, $H_n$, $K_n$ for notational simplicity.

\begin{lemma}\label{lem:MN}
For all $\un{y} \in X'$ we have $M_n\to t$, $\PP$-almost surely.
\end{lemma}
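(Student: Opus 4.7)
The plan is to show $M_n\to t$ almost surely under $\PP$ via a second-moment bound combined with the Borel--Cantelli lemma. Setting $\varphi_n(\omega)=\mu_\omega(C^n(\un{y}))$, we can write $M_n(\omega)=\sum_{i=1}^{k_n}\varphi_n(\theta^i\omega)$; by \eqref{eq:EMAt} its $\PP$-mean equals $k_n\mu(C^n(\un{y}))$, and this tends to $t$ since $\mu(C^n(\un{y}))\to 0$ for $\un{y}\in X'$ by definition of $X'$. The whole task thus reduces to bounding $\var_\PP(M_n)$ in a way that is summable in $n$.

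Expanding the variance as $k_n\var_\PP(\varphi_n)+2\sum_{k=1}^{k_n-1}(k_n-k)\,\text{Cov}(\varphi_n,\varphi_n\circ\theta^k)$, I would use two complementary bounds on each covariance. Assumption (II) yields $\varphi_n\le ce^{-h_0n}$ pointwise, giving the trivial bound $|\text{Cov}|\le 2ce^{-h_0n}\mu(C^n(\un y))$ valid for every $k$. For long gaps $k\ge n$, the pushforward identity $\mu_{\theta^k\omega}(A)=\mu_\omega(\sigma^{-k}A)$ (valid on the $\PP$-full-measure set $\Omega'$) lets me rewrite $\varphi_n(\omega)\varphi_n(\theta^k\omega)=\mu_\omega(A_n)\mu_\omega(\sigma^{-k}A_n)$ with $A_n=C^n(\un y)$; then the fibered mixing (III) with gap $g=k-n$ compares this product to $\mu_\omega(A_n\cap\sigma^{-k}A_n)$ up to an error $C(\omega)\alpha(g)$, and the annealed mixing (I) compares the $\PP$-expectation of this last quantity to $\mu(A_n)^2$ up to an error $C_0\alpha(g)$.

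The hard part will be integrating the error $C(\omega)\alpha(g)$ coming from (III), since only $\EE(C^p)<\infty$ with $p\in(0,1]$ is assumed. I would truncate $C$ at a level $R$: the part $\{C\le R\}$ contributes $R\alpha(g)$, while the tail $\{C>R\}$ is controlled via the trivial pointwise bound $|e|\lesssim e^{-h_0 n}$ together with $\PP(C>R)\le\EE(C^p)/R^p$. Optimizing $R$ yields an effective covariance bound of order $C_0\alpha(g)+\alpha(g)^{p/(p+1)}e^{-h_0n/(p+1)}$. Splitting the covariance sum at a threshold $g_0=e^{\beta n}$ into short-gap (trivial bound) and long-gap (improved bound) contributions, and using the lower bound $\mu(C^n(\un y))\ge e^{-(h+\eps)n}$ from $\un y\in X'$ together with the polynomial decay $\alpha(g)\lesssim g^{-q}$, each term in the resulting bound for $\var_\PP(M_n)$ will decay exponentially in $n$ provided $\beta$ lies in an interval of the form $(\beta_0(q,p,h,h_0),\,h_0)$. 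Non-emptiness of this interval translates into inequalities on $q$ which are ensured precisely by the hypothesis $q>\frac{h}{h_0}(1+\tfrac{3}{p})$. Once $\sum_n\var_\PP(M_n)<\infty$ is established, Chebyshev and the first Borel--Cantelli lemma deliver the $\PP$-almost sure convergence; the chief obstacle is the truncation step that reconciles the fibered mixing estimate (III) with the weak integrability of $C$.
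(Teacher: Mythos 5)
Your proposal is correct and follows essentially the same route as the paper: a variance bound for $M_n$ obtained by splitting the double sum at an exponential threshold, the trivial bound from (II) near the diagonal, hypotheses (III) and (I) (via the pushforward identity on $\Omega'$) far from it, and then Chebyshev plus Borel--Cantelli. The only difference is the device for reconciling the error term $C(\omega)\alpha(g)$ with $C\in L^p$, $p\in(0,1]$: you truncate $C$ at a level $R$ and optimize, whereas the paper restricts to the set $\Omega_n=\{\sum_{i\le k_n}C(\theta^i\omega)\le k_n^{3/p}\}$ and controls $\Omega_n^c$ by Markov's inequality together with the subadditivity of $x\mapsto x^p$; both variants close under the stated hypothesis on $q$.
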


\begin{proof}
Let 
\[
\Omega_n =\left\{ \omega\in \Omega \colon \sum_{i=1}^{k_n} C(\theta^i\omega)\le k_n^{\frac{3}{p}}\right\}.
\]
We estimate the second moment of $M_n$ on the set $\Omega_n$
$$
\EE(M_n^2 1_{\Omega_n}) 
	= \sum_{i,j=1}^{k_n} \int_{\Omega_n}\mu_{\theta^i\omega}(A)\mu_{\theta^j\omega}(A)d\PP(\omega).
$$

Let $\eps>0$ and consider now $m=m_n=\lfloor e^{h_0n/(1+\eps)}\rfloor$. Near the diagonal, that is when $|i-j|<m$, using hypothesis (II) we have that
\[
\begin{split}
\sum_{|i-i|<m} \int_{\Omega_n} \mu_{\theta^i\omega}(A)\mu_{\theta^j\omega}(A)d\PP(\omega)
&\le
\sum_{|i-j|<m} ce^{-h_0n} \int_\Omega \mu_{\theta^i\omega}(A)d\PP(\omega)\\
&\le
2cm e^{-h_0n} k_n\mu(A)\\
&\le
2ct m e^{-h_0n} .
\end{split}
\]
Far from the diagonal, the independence hypotheses (I) and (III) yield
\[
\begin{split}
\sum_{|i-j|\ge m}\int_{\Omega_n} \mu_{\theta^i\omega}(A)\mu_{\theta^j\omega}(A)d\PP(\omega)
&\le
2\sum_{j\ge i+m}\int_{\Omega_n} C(\theta^i\omega)\alpha(m-n)d\PP(\omega) +\\
&\quad\quad\quad\quad\quad + \int_\Omega\mu_{\theta^i\omega}(A\cap\sigma^{-(j-i)}A)d\PP(\omega)\\
&\le 
2 k_n k_n^{\frac{3}{p}}\alpha(m-n) + 2 \sum_{j\ge i+m} \mu(A\cap\sigma^{-(j-i)}A) \\
&\le
2 k_n^{1+\frac{3}{p}}\alpha(m-n) + k_n^2\mu(A)^2 + k_n^2 C_0 \alpha(m-n).
\end{split}
\]

On the other hand, since the random variable $C\in L^p(\Omega,\mathbb R)$ for some $p\in (0,1]$ it follows 
by Markov inequality that
$$
\PP(\Omega_n^c) 
	= \PP\left(\left(\sum_{i=1}^{k_n} C(\theta^i\omega)\right)^p>k_n^{3}\right)
	\le \PP\left(  \sum_{i=1}^{k_n} C(\theta^i\omega)^p  > k_n^{3}\right)\\
	\le k_n^{-2} \, \EE(C^p).
$$
Thus, we simply have
\[
\EE(M_n^2 1_{\Omega_n^c}) \le \PP(\Omega_n^c) \sup_\Omega M_n^2 \le k_n^{-2}\EE(C^p) k_n^{2} ce^{-h_0n}. 
\]
Combining these estimates with \eqref{eq:EMAt} which gives $\EE(M_n)=k_n\mu(A)$ 
we finally get a control on the variance of $M_n$ 
\[
\begin{split}
\var M_n
&= \EE(M_n^2 1_{\Omega_n}) + \EE(M_n^2 1_{\Omega_n^c}) -\EE(M_n)^2\\
& \le 
2ct m e^{-h_0n} +
2 k_n^{1+\frac{3}{p}}\alpha(m-n) + C_0 k_n^2 \alpha(m-n) +
\EE(C^p) c e^{-h_0n}.
\end{split}
\]
Thus, one can choose $\eps$ small enough such that $\sum_n \var M_n<\infty$. Indeed, for $n$ large enough $k_n^2 \alpha(m-n)<k_n^{1+\frac{3}{p}}\alpha(m-n)<2^{q}t^{1+\frac{3}{p}}e^{n\gamma}$ where $\gamma=(h+\eps)(1+\frac{3}{p})-q\frac{h_0}{1+\eps}$ and by definition of $q$, $\gamma<0$ if $\eps$ is sufficiently small. It is a classical result that any sequence of centered random variables  $(X_n)$ with
$\sum_n \var X_n<\infty$ is such that $X_n\to0$ a.s.\footnote{Since $\sum_n \var(X_n)<\infty$, we can choose a sequence $u_n\to 0$ so that
$\sum_n \var(X_n)/u_n^2<\infty$ also. Since by Chebyshev, $\PP(|X_n|> u_n)\le\var(X_n)/u_n^2$,
we conclude that $|X_n|\le u_n$ eventually a.s. by Borel-Cantelli.
Hence $X_n\to 0$ a.s. See also e.g. \cite[Theorem 22.6]{Bi}.
}
Hence, $M_n-\EE(M_n)\to0$ a.s., from which the conclusion follows since $\EE(M_n)\to t$.
\end{proof}

We now prove that all random variables used in Lemma~\ref{lem:somme} converge to
zero as $n$ tends to infinity. 
We fix a gap of size $g=g_n= \lfloor e^{h_0 n/4}\rfloor$.

\begin{lemma} \label{lem:EGN}
For $\mu$-almost every $\un{y}$ we have $\EE(G_n)\to0$.
\end{lemma}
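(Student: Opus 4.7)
The plan is to use $\theta$-stationarity to reduce $\EE(G_n)$ to a marginal correlation sum, and then bound that sum by splitting the range of $j$ at $j=n$, applying the exponential cylinder bound (II) for short gaps and the decay of correlations (I) for long ones.

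First, since in the random subshift setting $f_\omega=\sigma$ does not depend on $\omega$, the hitting time $\tau_A^\omega$ is actually the deterministic $\tau_A$. The $\theta$-invariance of $\PP$ and the marginal formula $\mu=\int\mu_\omega\,d\PP$ give
\[
\EE(G_n) = k_n\int_\Omega \mu_\omega(A\cap\{\tau_A\le g_n\})\,d\PP(\omega) = k_n\,\mu(A\cap\{\tau_A\le g_n\}),
\]
and a union bound yields $\mu(A\cap\{\tau_A\le g_n\})\le\sum_{j=1}^{g_n}\mu(A\cap\sigma^{-j}A)$.

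I would then split the sum at $j=n$. For $j\ge n$, the cylinders $A$ and $\sigma^{-j}A$ depend on disjoint blocks of coordinates, so (I) yields $\mu(A\cap\sigma^{-j}A)\le \mu(A)^2+C_0\alpha(j-n)$. The diagonal contribution $k_n g_n\mu(A)^2\le tg_n\mu(A)$ decays like $e^{-3h_0 n/4}$, using $g_n=\lfloor e^{h_0 n/4}\rfloor$ and (II). The cumulative correlation error $k_n\sum_m\alpha(m)$ is handled by a secondary split: choose a threshold $M_n$ and bound $\mu(A\cap\sigma^{-j}A)$ trivially (or via the sample-level (III)) for $j-n\le M_n$, and use $\sum_{m\ge M_n}\alpha(m)\lesssim M_n^{1-q}$ for $j-n>M_n$; the hypothesis $q>(h/h_0)(1+3/p)$ guarantees that $M_n$ can be chosen so both sub-sums are $o(1)$ after multiplication by $k_n\sim e^{hn}$. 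For $j<n$, the intersection $A\cap\sigma^{-j}A$ is nonempty only if $j$ is a period of the finite word $y_0\dots y_{n-1}$, and then it is a cylinder of length $n+j$, bounded by (II) by $ce^{-h_0(n+j)}$.

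The hard part, which is the main obstacle, is to control the short self-overlaps for $\mu$-typical $\un{y}$: one needs to show that for $\mu$-a.e. $\un{y}$ the minimum period of $y_0\dots y_{n-1}$ is eventually at least $\lambda n$ for some $\lambda>0$ large enough to absorb the factor $k_n\sim e^{hn}$ in the short-range sum. I would establish this by estimating the $\mu$-measure of the set of $\un{y}$ whose length-$n$ word has a period at most $\lambda n$: each such word is determined by its first $j\le\lambda n$ symbols, so the number of periodic words is at most $b(\omega)^{\lambda n}$, and each cylinder has sample measure at most $ce^{-h_0 n}$ by (II). Integrating over $\Omega$, splitting over a good set where $b(\omega)$ is not too large, and using $\EE(\log b)<\infty$ for the complement, one obtains a bound that is summable in $n$ for $\lambda$ sufficiently small, and Borel--Cantelli yields the desired period estimate. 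Substituting in the short-range sum gives a contribution exponentially smaller than $\mu(A)$, and combining all estimates shows $\EE(G_n)\to 0$ for $\mu$-a.e.\ $\un{y}$.
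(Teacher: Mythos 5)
Your opening reduction $\EE(G_n)=k_n\,\mu(A\cap\{\tau_A\le g_n\})$ is exactly the paper's first step, but the route you take to show $\mu(A\cap\{\tau_A\le g_n\})=o(\mu(A))$ breaks down. The union bound forces you to control each $\mu(A\cap\sigma^{-j}A)$ individually, and for $j\ge n$ hypothesis (I) only yields the \emph{additive} bound $\mu(A)^2+C_0\alpha(j-n)$. Summing over $j$, the $\alpha$-part is at least $C_0\alpha(1)>0$, a constant independent of $n$, whereas the target is $o(\mu(A))=o(e^{-h_0n})$. The secondary split cannot repair this: on the range $n\le j\le n+M_n$ the trivial bound gives a contribution $k_nM_n\mu(A)\approx tM_n\ge t$, which never tends to $0$, and the sample-level (III) carries the same additive $\alpha(j-n)$ defect (besides needing $C\in L^1$, which is not assumed since $C$ is only in $L^p$, $p\le1$). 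With merely polynomial $\alpha$, no choice of $M_n$ makes both sub-ranges small after multiplication by $k_n\sim e^{hn}$; the claim that $q>\frac{h}{h_0}(1+\frac3p)$ rescues the argument is incorrect --- that exponent is calibrated for Lemma~\ref{lem:MN}, where the target is $o(1)$ rather than $o(\mu(A))$. The short-range part $j<n$ also has a quantitative mismatch: your Borel--Cantelli argument gives a minimal period $\ge\lambda n$ only for $\lambda$ below roughly $h_0/\EE(\log b)$, while absorbing $k_n\sim e^{hn}$ against the cylinder bound $ce^{-h_0(n+j)}$ requires $\lambda>h/h_0-1$, which can exceed $1$ under the stated hypotheses (only $h\ge h_0$ is assumed).

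The paper's proof avoids the union bound entirely. Writing $\EE(G_n)=k_n\mu(A)\,\mu(\tau_A\le g_n\mid A)$, it invokes the Ornstein--Weiss theorem to get $\frac1n\log R_n(\un{x},\un{x})\to h$ for $\nu$-a.e.\ $(\omega,\un{x})$, hence the set $D(n_0)=\{\un{x}\colon R_n(\un{x},\un{x})\ge e^{nh_0/2}\ \text{for all } n\ge n_0\}$ has $\mu$-measure close to $1$. Since $g_n=\lfloor e^{h_0n/4}\rfloor<e^{nh_0/2}$, every point of $A\cap D(n_0)$ returns to $A=C^n(\un{y})$ only after time $g_n$, so $\mu(\tau_A\le g_n\mid A)\le\mu(D(n_0)^c\mid A)$, which is small for all large $n$ whenever $\un{y}$ is a Lebesgue density point of $D(n_0)$. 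Some such global device is needed here: the correlation hypotheses alone are too weak to control the individual short-return probabilities $\mu(A\cap\sigma^{-j}A)$ at the precision a union bound demands.
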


\begin{proof}
By stationarity of $\mu$ we obtain
\begin{align}\label{eq:meanG}
\EE (G_n )
&= 
\sum_{i=1}^{k_n}\int_\Omega \mu_{\theta^i\omega}(A\cap \{R(\cdot,A)\le g\}) \, d\PP(\omega)  \\
&=
k_n \, \mu(A\cap \{R(\cdot,A)\le g\}) 
=
k_n\, \mu(A) \, \mu(R(\cdot,A)\le g|A), \nonumber
\end{align}
where $\mu(\cdot | A)$ stands for the usual conditional measure on $A$. Using relation \eqref{eq:meanG} above 
and that $k_n\mu(A)\to t$  we are left to prove that $\mu(R(\cdot,A)\le g|A) \to 0$ (as $n$ tends to infinity) for
$\mu$-almost every $\un{y}$.

By the Ornstein-Weiss theorem \cite{ow} we 
have for $\nu$-a.e. $(\omega,\un{x})$
\[
\lim_{n\to\infty} \frac1n \log R_n(\un{x},\un{x})=h>h_0/2>0,
\]
since by our assumptions $h_0 \le h$.
Given $n_0\ge 1$ consider the set $D(n_0) = \{\un{x} \in X\colon R_n(\un{x},\un{x})\ge e^{n h_0/2} \text{ for all } n\ge n_0 \}$.

Let $\eps>0$ be small and fixed.
Since $\mu(D(n_0))=\nu(\Omega\times D(n_0))$ goes to $1$ as $n_0\to\infty$, we can take
$n_0$ so large that $\mu(D(n_0))>1-\eps$. 
Let $\un{y}$ be a Lebesgue density point of $D(n_0)$ for the measure $\mu$. It holds that 
\[
\mu(D(n_0) | C^n(\un{y})) := \frac{1}{\mu(C^n(\un{y}))} \, \mu(C^n(\un{y})\cap D(n_0))\ge 1-\eps
\]
for all large $n$. Therefore 
\[
\mu(R(\cdot,C^n(\un{y}))\le g|C^n(\un{y}))
	\le \mu(D(n_0)^c|C^n(\un{y}))<\eps.
\]
Hence, 
taking a sequence $\eps_q\to0$ gives the conclusion.
\end{proof}

\begin{lemma} \label{lem:GNas}
For $\mu$-a.e $\un{y} \in X$ we have $ G_n\to 0$ $\PP$-almost surely. 
\end{lemma}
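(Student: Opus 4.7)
\noindent\emph{Proof plan.} The strategy is to derive the almost-sure convergence from the $L^1$ statement of Lemma~\ref{lem:EGN} by bounding the variance of $G_n$. Since Lemma~\ref{lem:EGN} already gives $\EE(G_n)\to0$ for $\mu$-a.e.\ $\un y$, Chebyshev's inequality combined with Borel--Cantelli reduces the problem to showing that $\sum_n\var(G_n)<\infty$ for $\mu$-a.e.\ $\un y$; then $G_n-\EE(G_n)\to0$ $\PP$-a.s.\ and the $L^1$ bound yields $G_n\to0$ $\PP$-a.s.

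Write $G_n(\omega)=\sum_{i=1}^{k_n}f_i(\omega)$ with $f_i(\omega)=\mu_{\theta^i\omega}(B_n)$ and
\[
B_n:=C^n(\un y)\cap\{R(\cdot,C^n(\un y))\le g_n\}=\bigcup_{k=1}^{g_n}\bigl(C^n(\un y)\cap\sigma^{-k}C^n(\un y)\bigr).
\]
The key observation is that $B_n$ is a finite union of $(n+g_n)$-cylinders, so $B_n\in\F_0^{n+g_n}$ and both (I) and (III) apply to it. The plan is to mimic line-by-line the variance computation of Lemma~\ref{lem:MN}, with $A$ replaced by $B_n$: decompose $\EE(G_n^2)=\sum_{i,j}\EE(f_if_j)$ into a near-diagonal band $|i-j|<m$, with $m=\lfloor e^{h_0n/(1+\eps)}\rfloor$, and a far-diagonal complement, restrict the integration to the set $\Omega_n=\{\omega:\sum_{i\le k_n}C(\theta^i\omega)\le k_n^{3/p}\}$, and handle $\Omega_n^c$ via $\PP(\Omega_n^c)\le k_n^{-2}\EE(C^p)$.

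The near-diagonal contribution is controlled by (II), which gives $f_i\le\mu_{\theta^i\omega}(C^n(\un y))\le ce^{-h_0n}$, and is bounded by $2m\,c\,k_n\mu(B_n)\,e^{-h_0n}\le 2mcte^{-h_0n}$, exponentially small by the choice of $m$. For the far-diagonal part on $\Omega_n$, apply (III) to $B_n$ with gap $(j-i)-(n+g_n)$ to get
\[
\mu_{\theta^i\omega}(B_n)\mu_{\theta^j\omega}(B_n)\le\mu_{\theta^i\omega}\bigl(B_n\cap\sigma^{-(j-i)}B_n\bigr)+C(\theta^i\omega)\,\alpha\bigl((j-i)-n-g_n\bigr),
\]
integrate against $\PP$, and invoke (I) on the intersection to replace it by $\mu(B_n)^2+O(\alpha(m-n-g_n))$. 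Summing over pairs with $|i-j|\ge m$, the leading $k_n^2\mu(B_n)^2$ cancels $\EE(G_n)^2$, and the remaining terms $k_n^{1+3/p}\alpha(m-n-g_n)$ and $C_0k_n^2\alpha(m-n-g_n)$ are of exactly the same shape as those in Lemma~\ref{lem:MN}; since $g_n=\lfloor e^{h_0n/4}\rfloor$ is negligible compared with $m$, $\alpha(m-n-g_n)$ has the same order as $\alpha(m-n)$, and the same small-$\eps$ choice making $\gamma=(h+\eps)(1+3/p)-qh_0/(1+\eps)<0$ delivers $\sum_n\var(G_n)<\infty$.

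The main obstacle is the sole genuinely new point, namely verifying $B_n\in\F_0^{n+g_n}$ so that (I) and (III) apply with the corrected gap $(j-i)-(n+g_n)$; once this is secured the remainder is a verbatim rerun of the variance estimate of Lemma~\ref{lem:MN}, using $\mu(B_n)\le\mu(C^n(\un y))$ and the same truncation device on $\Omega_n$ to cope with $C\in L^p$, $p\in(0,1]$.
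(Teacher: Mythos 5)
Your proposal is correct and follows essentially the same route as the paper: both rerun the second-moment computation of Lemma~\ref{lem:MN} with the sole modification that $A\cap\{R(\cdot,A)\le g\}\in\F_0^{n+g}$, so the mixing gap becomes $m-n-g$, which is harmless since $g_n=\lfloor e^{h_0n/4}\rfloor$ is negligible against $m_n$; the conclusion then follows from summability of the variances together with Lemma~\ref{lem:EGN}.
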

\begin{proof}
Reproducing the computation of the second moment in the proof of Lemma~\ref{lem:MN}, with the same $m_n$, taking into account that 
now $A\cap \{R(\cdot,A)\le g\}\in\F_0^{n+g}$ only, we get 
\[
\begin{split}
\var G_n
&=
\sum_{i,j} \int_\Omega \mu_{\theta^i\omega}(A\cap \{R(\cdot,A)\le g\})\mu_{\theta^j\omega}(A\cap \{R(\cdot,A)\le g\})d\PP(\omega) -\EE(G_n)^2\\
&\le
2c \EE(G_n) m e^{-h_0n} +
2 k_n^{1+\frac{3}{p}}\alpha(m-n-g) + C_0 k_n^2 \alpha(m-n-g) +
\EE(C^p) ce^{-h_0n}.
\end{split}
\]
This proves that $\sum_n \var G_n<\infty$. The conclusion follows as in the proof of Lemma~\ref{lem:MN} using Lemma~\ref{lem:EGN}.
\end{proof}

\begin{lemma} \label{lem:HNas}
For all $\un{y}\in X'$ we have $H_n\to0$ $\PP$-almost surely.
\end{lemma}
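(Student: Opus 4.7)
My plan is to apply the fibered mixing hypothesis (III) termwise in the definition of $H_n$, and then control the resulting random sum $\sum_i C(\theta^i\omega)$ using the $L^p$ integrability of $C$ together with Birkhoff's ergodic theorem.

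For each $i\le k_n$ and $j\ge 1$, I would first observe that $\{\tau_A^{\theta^{i+g_n}\omega}(\cdot)>j\}=\bigcap_{\ell=1}^{j}\sigma^{-\ell}A^c$ lies in $\F_0^{j+n-1}$. Since $A=C^n(\un{y})\in\F_0^n$, writing $\sigma^{-g_n}=\sigma^{-(g_n-n)-n}$ and applying (III) with gap parameter $g_n-n$ yields, uniformly in $j$,
\[
\left|\mu_{\theta^i\omega}\bigl(A\cap\sigma^{-g_n}\{\tau_A>j\}\bigr) - \mu_{\theta^i\omega}(A)\,\mu_{\theta^{i+g_n}\omega}(\{\tau_A>j\})\right|\le C(\theta^i\omega)\,\alpha(g_n-n).
\]
The crucial point is that the bound in (III) does not depend on the level $m$ in which the second set sits, so this estimate is genuinely uniform in $j$. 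Summing over $1\le i\le k_n$ gives
\[
H_n(\omega)\le\alpha(g_n-n)\sum_{i=1}^{k_n}C(\theta^i\omega).
\]

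Next I would control the ergodic sum. Since $C^p\in L^1(\PP)$ and $(\Omega,\theta,\PP)$ is ergodic, Birkhoff's theorem gives $\sum_{i=1}^{k_n}C(\theta^i\omega)^p\le 2k_n\EE(C^p)$ for all large $n$, $\PP$-almost surely. Combined with the subadditivity $(a+b)^p\le a^p+b^p$ valid for $p\in(0,1]$, which implies $\bigl(\sum_i C(\theta^i\omega)\bigr)^p\le \sum_i C(\theta^i\omega)^p$, we obtain
\[
H_n(\omega)\le \bigl(2\EE(C^p)\bigr)^{1/p}\,\alpha(g_n-n)\,k_n^{1/p}\quad\text{eventually},\ \PP\text{-a.s.}
\]
Using $\un{y}\in X'$ to get $k_n\le e^{(h+\eta)n}$ for any $\eta>0$ and large $n$, together with the choice $g_n\sim e^{h_0 n/4}$ and the polynomial decay $\alpha(g)=o(g^{-q})$, a direct check of exponents under the hypothesis $q>(h/h_0)(1+3/p)$ (the same balance that made $\var M_n$ summable) shows the right-hand side tends to zero exponentially in $n$; hence $H_n\to 0$ $\PP$-almost surely.

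The main obstacle I anticipate is the supremum over $j\ge 1$ in the definition of $H_n$, since $\{\tau_A>j\}$ is not a fixed-level cylinder but grows in complexity with $j$. This is resolved by the observation that the event always lies in $\F_0^{j+n-1}$ and, crucially, that the bound provided by (III) is uniform in the level $m$ of $\F_0^m$ in which the second set lives, so a single mixing estimate controls the whole supremum.
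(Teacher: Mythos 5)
Your proof is essentially identical to the paper's: both apply hypothesis (III) termwise (the bound being uniform in $j$ because the right-hand side of (III) does not depend on the level $m$ of the second set) to get $H_n(\omega)\le\alpha(g_n-n)\sum_{i=1}^{k_n}C(\theta^i\omega)$, pass to $\bigl(\sum_{i}C(\theta^i\omega)^p\bigr)^{1/p}$ by subadditivity of $x\mapsto x^p$, and conclude via the ergodic theorem that $H_n\le\alpha(g_n-n)\,O(k_n^{1/p})$. The only point worth flagging is the final exponent check, which you assert but do not carry out: with the paper's gap $g_n=\lfloor e^{h_0 n/4}\rfloor$ it requires $q>4h/(ph_0)$, which for $p<1$ is strictly stronger than the stated hypothesis $q>(h/h_0)(1+3/p)$; this is an oversight present in the paper itself (remedied by taking, say, $g_n=\lfloor e^{h_0 n/3}\rfloor$, which still works in Lemmas~\ref{lem:EGN} and~\ref{lem:KNas} since $1/3<1/2$), not something your argument introduces.
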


\begin{proof}
We use the correlation hypothesis (III) to obtain
\[
H_n(\omega)\le  \sum_{i=1}^{k_n} C(\theta^i\omega)\alpha(g-n)
\le
\alpha(g-n) \left(\sum_{i=1}^{k_n} C(\theta^i\omega)^p\right)^{1/p}.
\]
By the ergodic theorem we have in addition $\sum_{i=1}^{k_n} C(\theta^i\omega)^p = O( k_n \mathbb E(C^p))$ for
$\PP$-almost every $\omega$. Consequently
\[
H_n \leq \alpha(g-n) O\left(k_n^{\frac1p}\right) \to 0\quad\PP\text{-almost surely.}
\]
\end{proof}

\begin{lemma} \label{lem:KNas}
For all $\un{y}\in X'$ we have $K_n\to0$, $\PP$-almost surely.
\end{lemma}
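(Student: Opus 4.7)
My plan is to show that $K_n$ is dominated by the product of a purely deterministic, exponentially small factor and the already-controlled random variable $M_n$, thereby reducing the statement to Lemma~\ref{lem:MN}.

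The first step is a uniform deterministic bound on the inner factor $\mu_{\theta^i\omega}(R(\cdot,A)\le g_n)$. For every $\omega\in\Omega'$ and every $i\ge 1$, a union bound combined with the definition of $\Omega'$ (which gives $\mu_{\theta^i\omega}(\sigma^{-j}A)=\mu_{\theta^{i+j}\omega}(A)$) and hypothesis~(II) yields
\[
\mu_{\theta^i\omega}\bigl(R(\cdot,A)\le g_n\bigr)\le \sum_{j=1}^{g_n}\mu_{\theta^{i+j}\omega}(A)\le g_n\,c\,e^{-h_0n},
\]
uniformly in $i$, valid for $\PP$-almost every $\omega$ (outside the null set on which~(II) fails for some forward shift of $\omega$).

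Plugging this estimate into the definition of $K_n$ and factoring the constant out of the sum gives
\[
K_n(\omega)\le g_n\,c\,e^{-h_0n}\sum_{i=1}^{k_n}\mu_{\theta^i\omega}(A)=g_n\,c\,e^{-h_0n}\,M_n(\omega).
\]
With the gap $g_n=\lfloor e^{h_0n/4}\rfloor$ chosen earlier in the section, the prefactor is bounded by $c\,e^{-3h_0n/4}$ and tends to zero as $n\to\infty$. By Lemma~\ref{lem:MN}, for $\un{y}\in X'$ and $\PP$-a.e.\ $\omega$ one has $M_n(\omega)\to t$, so in particular $(M_n(\omega))_n$ is $\PP$-a.s.\ bounded. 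The product therefore tends to zero almost surely, which is the desired conclusion.

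There is no real obstacle here: unlike in the proofs of Lemma~\ref{lem:GNas} and Lemma~\ref{lem:HNas}, neither the Ornstein--Weiss argument nor the mixing hypothesis~(III) enters the picture. The only thing to check is that the scale $g_n=e^{h_0n/4}$, already fixed in the section, is small enough compared with the size $1/\mu(A)\asymp e^{hn}$ of the relevant time window that the crude union bound on hitting times already wins against the growth of the number $k_n$ of summands, which it does precisely because $g_n\,e^{-h_0n}=e^{-3h_0n/4}\to 0$ regardless of the value of $h$.
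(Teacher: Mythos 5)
Your proof is correct and follows essentially the same route as the paper's: both bound $\mu_{\theta^i\omega}(R(\cdot,A)\le g_n)$ by a union bound and hypothesis (II) to get $K_n\le g_n c e^{-h_0 n} M_n$, then invoke the almost sure convergence of $M_n$ from Lemma~\ref{lem:MN} and the choice $g_n=\lfloor e^{h_0 n/4}\rfloor$ to conclude. Your write-up is just slightly more explicit about the role of $\Omega'$ and the exponent bookkeeping.
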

\begin{proof} We have $K_n\le gc e^{-h_0n} M_n$ since 
\[
\mu_\omega(R(\cdot,A)\le g)
	=\mu_\omega\Big(\bigcup_{i=1}^g\sigma^{-i}A\Big)
	\le \sum_{i=1}^g \mu_{\theta^i\omega}(A)\le gce^{-h_0n}.
\] 
In addition, $M_n$ converges $\PP$-almost surely by Lemma~\ref{lem:MN}, which gives the conclusion.
\end{proof}

We are now in a position to finish the proof of our first main result.

\begin{proof}[Proof of Theorem~\ref{thm:main1}]
By Lemmas~\ref{lem:GNas},\ref{lem:HNas} and \ref{lem:KNas} we have $G_n+H_n+K_n\to0$ $\PP$-almost surely and for $\mu$-almost every $\un{y}$.
The theorem follows then from Lemma~\ref{lem:delta} using Lemma~\ref{lem:somme}.
\end{proof}

\section{Random endomorphisms with decay of correlations}\label{sec:exp}

This section is devoted to the proof of Theorem~\ref{thm:main2} on random dynamical systems.
Since the strategy is analogous to the one of Section~\ref{sec:shift} we will only write the proofs of the versions of Lemmas \ref{lem:MN}, \ref{lem:EGN} and \ref{lem:HNas} with full details and leave the adaptations of the other lemmas to the reader.

Write $\Gamma(m)=\sum_{\ell=m}^\infty\gamma(\ell)$. Note that $\Gamma$ is also super polynomially decreasing.
Define the set $X'$ by
\[
X'=\{x\in X\colon \overline{d}_\mu(x)\le d_1\}
\]
for some constant $d_1$ sufficiently large. Since the upper dimension is $\mu$-a.e. 
bounded by the dimension of the space $X$ itself, it suffices to take $d_1=\dim(X)$
to get a full measure set.

We fix $t>0$, take $A=B(x,r)$ and set $M_r=M_{A,t}$ analogously to Section~\ref{sec:gal}. 
We will also write $k_r$, $G_r$, $H_r$ and $K_r$ for simplicity.

\begin{lemma}\label{lem:MNadapt}
For all $x \in X'$ we have $M_r\to t$ in probability on $(\Omega,\PP)$ as $r\to 0$ .
\end{lemma}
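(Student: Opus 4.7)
The plan is to mimic the variance-based approach of Lemma~\ref{lem:MN}. Since $\EE(M_r)=k_r\mu(A)\to t$ by stationarity and the definition of $k_r$, and since we only need convergence in probability, it suffices by Chebyshev's inequality to show that $\var(M_r)\to 0$ as $r\to 0$. First I would write
\[
\var(M_r)=\sum_{i,j=1}^{k_r}\int_\Omega\bigl(\mu_{\theta^i\omega}(A)\mu_{\theta^j\omega}(A)-\mu(A)^2\bigr)\,d\PP(\omega)
\]
and split the sum into a near-diagonal block $|i-j|<m_r$ and an off-diagonal block $|i-j|\ge m_r$, where $m_r$ is a gap to be chosen. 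For the near-diagonal block, assumption (II') gives $\mu_\omega(A)\le cr^{d_0}$ uniformly, so this contribution is bounded by $2m_r\cdot cr^{d_0}\cdot\EE(M_r)=O(m_r r^{d_0})$.

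The new difficulty with respect to the symbolic case is that $1_A$ is not Lipschitz, whereas (I') and (III') require a Lipschitz test function. I would approximate $1_{B(x,r)}$ by a Lipschitz function $\tilde\varphi$ squeezed between $1_{B(x,r)}$ and $1_{B(x,r+\rho_r)}$ with $\Lip(\tilde\varphi)\le 1/\rho_r$, for a second scale $\rho_r\to 0$. Applying (III') to the pair $(\tilde\varphi,1_A)$ controls $\int\tilde\varphi\cdot(1_A\circ f_\omega^\ell)\,d\mu_\omega$ by $\mu_\omega(\tilde\varphi)\mu_{\theta^\ell\omega}(A)+C(\omega)\gamma(\ell)/\rho_r$; assumption (IV') bounds $\mu_\omega(\tilde\varphi)\le\mu_\omega(A)+(\mu_\omega(B(x,r+\rho_r))-\mu_\omega(A))$, which after integration in $\omega$ is at most $\mu(A)+r^{-b}\rho_r^a$. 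Using (I') analogously to compare $\int\mu_{\theta^i\omega}(A)\mu_{\theta^j\omega}(A)\,d\PP$ with $\mu(A)^2$ then yields an off-diagonal contribution of order
\[
k_r^2\bigl(r^{-b}\rho_r^a+\gamma(m_r)/\rho_r\bigr),
\]
after taking expectation of $C(\omega)$ (using its integrability).

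The main obstacle is to pick the two scales $m_r,\rho_r$ so that the three error terms vanish together. On the set $X'$ we have $k_r\lesssim r^{-d_1}$, so it suffices to take $\rho_r=r^s$ with $s$ large enough that $sa>2d_1+b$, killing the annulus term, and $m_r$ polynomial in $1/r$ chosen large enough that $m_r r^{d_0}\to 0$; the super-polynomial decay of $\gamma$ leaves ample room for $k_r^2\gamma(m_r)/\rho_r\to 0$. A minor subtlety is the handling of $C(\omega)$: if $C$ is only in some $L^p$ with $p<1$, one repeats the truncation $\Omega_r=\{\sum_i C(\theta^i\omega)^p\le k_r^3\}$ from Lemma~\ref{lem:MN} and estimates the complement via Markov, but for convergence in probability an $L^1$ bound is already enough. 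Plugging these choices into Chebyshev finishes the argument.
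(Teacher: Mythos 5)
Your proposal is correct and follows essentially the same route as the paper's proof: a second‑moment/Chebyshev argument with a near/off‑diagonal split at a polynomial gap $m_r$, a Lipschitz approximation of $1_{B(x,r)}$ at a second scale $\rho_r$ to invoke (I') and (III'), assumption (IV') to absorb the annulus $B(x,r+\rho_r)\setminus B(x,r)$, and the truncation set $\Omega_r$ to handle $C\in L^p$. The paper's concrete choices are $m_r=\lfloor r^{-d_0/2}\rfloor$ and $\rho_r=r^v$ with $v>(d_1+b)/a$ (it keeps a factor $\mu(B(x,r))$ that you discard, hence your slightly larger exponent), but this makes no difference to the argument.
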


\begin{proof}
We use the same method as in Lemma \ref{lem:MN}.
Recall that
\[
M_r=\sum_{i=1}^{k_r} \mu_{\theta^i\omega}(B(x,r)).
\]
We use a set $\Omega_r$ defined as $\Omega_n$ with $k_r$ instead of $k_n$, and estimate $E(M_r^21_{\Omega_r})$.
Take $m=m_r=\lfloor r^{-u}\rfloor$, with $u=d_0/2$ and $\rho=\rho_r=r^v$ form some $v>(d_1+b)/a$.

Those $|i-j|< m$ using (II') again give a contribution less than $2c t m r^{d_0}$.

Those $|i-j|\ge m$, using (I') and (III') give a contribution less than (below $\varphi_{x,r}$ denotes 
a $\rho^{-1}$-Lipschitz function on $X$ such that $1_{B(x,r)}\le\varphi_{x,r}\le 1_{B(x,r+\rho)}$)
\begin{align*}
& 
2 \sum_{i=1}^{k_r-m}\sum_{j=i+m}^{k_r} \int_{\Omega_r}\mu_{\theta^i\omega}(B(x,r))\mu_{\theta^j\omega}(B(x,r))d\PP(\omega)\\
&
\le
2\sum_{i=1}^{k_r}\sum_{\ell=m}^{k_r} \int_{\Omega_r}\left[ \int_X\varphi_{x,r}1_{B(x,r)}\circ f_\omega^{\ell} d\mu_{\theta^i\omega}+C(\theta^i\omega)\gamma(\ell)\rho^{-1}\right] d\PP(\omega)\\
&
\le
2\sum_{i=1}^{k_r}\sum_{g=m}^{k_r} \left[C_0\gamma(\ell)\rho^{-1}+\mu(B(x,r+\rho))\mu(B(x,r))\right] + 
\rho^{-1}\sum_{\ell=m}^{k_r}\gamma(\ell) \EE(1_{\Omega_r} \sum_{k=1}^{k_r} C\circ \theta^i) \\
&
\le
C_0 k_r \Gamma(m) \rho^{-1} + k_r^2\mu(B(x,r+\rho))\mu(B(x,r)) + \rho^{-1}\Gamma(m) k_r^{3/p}. \\
\end{align*}
By assumption (IV') we have $\mu(B(x,r+\rho))\le \mu(B(x,r))+r^{-b}\rho^a$ therefore 
the expectation $E(M_r^21_{\Omega_r})\le r^c$ for some constant $c>0$.

On the other hand $E(M_r 1_{\Omega_r^c})$ satisfies the same upper bound that in Lemma \ref{lem:MN}, therefore 
the variance of $M_r$ is bounded from above by $r^c$ (changing the constant $c$ is necessary).

This proves that the variance of $M_r$ converges to zero as $r\to0$, hence $M_r$ itself converges to $t$
in $L^2$, thus in probability.
\end{proof}

\begin{remark}\label{rem:an}
Indeed, since the variance of $M_r$ is bounded by $r^c$ a Borel-Cantelli argument as in the proof of Lemma~\ref{lem:MN}
shows that $M_{r_k}\to t$ a.s. for the subsequence $r_k=a^k$, for any $a\in(0,1)$.
\end{remark}

We now set the gap to $g_r=\lfloor r^{-d_0/4}\rfloor$.

\begin{lemma} \label{lem:EGNr}
For $\mu$-almost every $x$ we have $\EE(G_r)\to0$.
\end{lemma}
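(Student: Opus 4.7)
The plan is to follow the scheme of Lemma~\ref{lem:EGN}, with Ornstein-Weiss replaced by its random counterpart (the result of Marie and Rousseau~\cite{MarieR}) and cylinders replaced by metric balls.

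First, I would use the $\theta$-invariance of $\PP$ to reduce $\EE(G_r)$ to an annealed hitting quantity. Because $A=B(x,r)$ is deterministic, each of the $k_r$ summands in $\EE(G_r)$ equals $\int_\Omega \mu_\omega(A\cap\{\tau_A^\omega(\cdot)\le g_r\})\,d\PP(\omega)$ after the change of variables $\omega\mapsto\theta^{-i}\omega$, and this quantity equals $\nu(\{(\omega,y)\in\E:y\in A,\,\tau_A^\omega(y)\le g_r\})$. Since $k_r\mu(A)\to t$, the conclusion $\EE(G_r)\to 0$ reduces to showing
\[
\frac{\nu(\{(\omega,y):y\in B(x,r),\,\tau_{B(x,r)}^\omega(y)\le g_r\})}{\mu(B(x,r))}\to 0\quad\text{as }r\to 0
\]
for $\mu$-almost every $x$.

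Next I would exploit a trivial triangle inequality: if $y\in B(x,r)$ and $\tau_{B(x,r)}^\omega(y)\le g_r$, then $f_\omega^k(y)\in B(x,r)$ for some $k\le g_r$, so $d(f_\omega^k(y),y)<2r$, i.e.\ $\tau_{B(y,2r)}^\omega(y)\le g_r$. The quenched random Ornstein-Weiss theorem of Marie and Rousseau \cite{MarieR}, applicable under (I')-(III'), combined with (II'), gives
\[
\liminf_{r\to 0}\frac{\log\tau_{B(y,r)}^\omega(y)}{-\log r}=\underline{d}_\mu(y)>d_0\quad\text{for $\nu$-a.e.\ }(\omega,y).
\]
Consequently the good sets $E_{r_0}:=\{(\omega,y):\tau_{B(y,2r)}^\omega(y)>r^{-d_0/4}\text{ for all }r\le r_0\}$ satisfy $\nu(E_{r_0})\uparrow 1$ as $r_0\to 0$. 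For $r\le r_0$ we then get
\[
\nu(\{y\in B(x,r),\,\tau_{B(x,r)}\le g_r\})\le\int_{B(x,r)}\Psi_{r_0}(y)\,d\mu(y),
\]
where $\Psi_{r_0}(y):=\PP(\omega:(\omega,y)\in E_{r_0}^c)$ is a nonnegative function with $\int\Psi_{r_0}\,d\mu=\nu(E_{r_0}^c)\to 0$.

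Finally, I would pass to the limit in two stages. For $\mu$-a.e.\ $x$, Lebesgue differentiation gives $\mu(B(x,r))^{-1}\int_{B(x,r)}\Psi_{r_0}\,d\mu\to\Psi_{r_0}(x)$ as $r\to 0$. Fubini applied to $E_{r_0}\uparrow E_0$, a set of full $\nu$-measure, shows $\Psi_{r_0}(x)\downarrow 0$ as $r_0\to 0$ for $\mu$-a.e.\ $x$. A diagonal argument along a countable sequence $r_0\to 0$ isolates a single full-measure set of $x$'s for which letting first $r\to 0$ and then $r_0\to 0$ yields the desired vanishing. The main obstacle is precisely this double-limit structure: because the inner function $\Psi_{r_0}$ depends on the scale parameter, the standard density-point argument must be run at each $r_0$ and then combined via a diagonal procedure; a secondary concern is to verify (or explicitly quote) that (I'), (II'), (III') are indeed enough to trigger the Marie-Rousseau quenched recurrence theorem in the present metric setting.
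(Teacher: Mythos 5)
Your proposal follows essentially the same route as the paper's proof: reduce $\EE(G_r)/k_r$ by stationarity to the annealed quantity $\nu(\{(\omega,y)\colon y\in B(x,r),\ \tau^\omega_{B(x,r)}(y)\le g_r\})$, enlarge the event via the triangle inequality to a return-time event for $B(y,2r)$, kill that event using the quenched recurrence-rate theorem of Marie--Rousseau, and conclude by a Lebesgue density/differentiation argument at a typical $x$ followed by the double limit $r\to0$, then $r_0\to0$ along a countable sequence. Two small corrections are needed. First, the function you call $\Psi_{r_0}(y)=\PP(\omega\colon(\omega,y)\in E_{r_0}^c)$ must be defined with the disintegration $(\nu_y)_{y\in X}$ of $\nu$ over $X$, i.e.\ $\Psi_{r_0}(y)=\nu_y(\{\omega\colon(\omega,y)\in E_{r_0}^c\})$: the identities $\int_X\Psi_{r_0}\,d\mu=\nu(E_{r_0}^c)$ and $\nu(\{y\in B(x,r),\ \tau\le g_r\})\le\int_{B(x,r)}\Psi_{r_0}\,d\mu$ are Fubini only for the product $\PP\otimes\mu$, which $\nu$ need not be; with $\nu_y$ in place of $\PP$ every step goes through, and this is exactly how the paper argues (it uses the sublevel sets $E_\eta(r_0)=\{y\colon\nu_y(Q(r_0,y))\le\eta\}$ and their density points rather than Lebesgue points of $\Psi_{r_0}$, an equivalent formulation). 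Second, on your ``secondary concern'': the lower bound $\liminf_{r\to0}\log\tau^\omega_r(y)/(-\log r)\ge\underline{d}_\mu(y)\ge d_0$ from \cite{MarieR} is invoked in the paper under (I'), (II') \emph{and the random aperiodicity (V')}, not (III'); aperiodicity is genuinely needed there and should be quoted.
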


\begin{proof}
The proof follows the one of Lemma \ref{lem:EGN}.
We have by stationarity that 
\[
\begin{split}
\EE(G_r)
&=k_r\int_\Omega \mu_\omega(A \cap \{\tau_A^\omega\le g\})d\PP(\omega)\\
&=k_r\int_{\Omega\times X} 1_{B(x,r)}(y) 1_{\{\tau_{B(x,r)}^\omega\le g\}}(y) d\nu(\omega,y).
\end{split}
\]
Using assumptions (I'), (II') and (V'), the random recurrence rate \cite{MarieR}
gives 

\[
\liminf_{r\to0}\frac{\log\tau_r^\omega(x)}{-\log r}\ge \underline{d}_{\mu}(x)\ge d_0
\]
for $\nu$-a.e. $(\omega,x)$.
Let $r_0>0$. Set $\alpha=d_0/2$ and
\[
Q(r_0,y)=\{\omega\in \Omega\colon \exists r<r_0, \tau_{2r}^\omega(y)<r^{-\alpha}\}.
\]
Denote by $(\nu_y)_{y\in X}$ the decomposition of the measure $\nu$ on $X$, meaning
\[
\nu(E) = \int_X\nu_y(\{\omega\in\Omega\colon (\omega,y)\in E\})d\mu(y), 
\]
for all measurable $E\subset \Omega\times X$.
Let $\eta>0$ and set
\[
E_\eta(r_0)=\{y\in X\colon \nu_y(Q(r_0,y))\le \eta\}.
\]
Let $x$ be a Lebesgue density point of the set $E_\eta(r_0)$ for the measure $\mu$, i.e.
\[
\frac{\mu(B(x,r)\cap E_\eta(r_0))}{\mu(B(x,r))}\to 1
\]
as $r\to0$. Hence there exists $r_1<r_0$ such that for any $r<r_1$
\[
\mu(B(x,r)\cap E_\eta(r_0)^c) \le \eta \mu(B(x,r)).
\]
Let $r<r_1$. Since $g<r^{-\alpha}$ we get
\[
\begin{split}
\EE(G_r)/k_r
&=
\int_{\Omega\times X} 1_{B(x,r)}(y) 1_{\{\tau_{B(x,r)}^\omega(y)\le g\}} d\nu(\omega,y)\\
&\le
\int_{\Omega\times X}1_{B(x,r)}(y) 1_{\{\tau_{2r}^\omega(y)<r^{-\alpha}\}} d\nu(\omega,y)\\
&\le
\int_{\Omega\times X}1_{B(x,r)}(y) 1_{\{Q(r_0,y)\}}(\omega) d\nu(\omega,y)\\
&=
\int_X 1_{B(x,r)}(y) \nu_y(Q(r_0,y))d\mu(y)\\
&\le
\mu(B(x,r)\cap E_\eta(r_0)^c)+\eta \mu(B(x,r)\cap E_\eta(r_0))\\
&\le 2\eta \mu(B(x,r)).
\end{split}
\]
Since $\eta$ is arbitrary and the measure of $E_\eta(r_0)$ can be made arbitrarily close to one, 
this shows that $E(G_r)\to0$ for $\mu$a.e. $x$, since $k_r\mu(B(x,r))\to t$.
\end{proof}

Finally, using Lemma~\ref{lem:EGNr} and Markov's inequality, we obtain that $G_r$ converges to zero in probability. 

\begin{lemma} 
For all $x\in X'$ we have $H_r\to 0$ in probability.
\end{lemma}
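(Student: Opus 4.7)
My plan is to mirror the strategy of Lemma~\ref{lem:MNadapt}: approximate the indicator of $A=B(x,r)$ by Lipschitz bumps in order to invoke the fibered decay of correlations (III'), and absorb the approximation error via the annular hypothesis (IV'). With the gap $g_r=\lfloor r^{-d_0/4}\rfloor$ already fixed in the text, I would set $\rho_r=r^v$ with $v>(d_1+b)/a$, as in Lemma~\ref{lem:MNadapt}.

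First, I pick $\rho_r^{-1}$-Lipschitz functions $\varphi^\pm:X\to[0,1]$ with $1_{B(x,r-\rho_r)}\le\varphi^-\le 1_{B(x,r)}\le\varphi^+\le 1_{B(x,r+\rho_r)}$. For every integer $j\ge 1$ and every $\omega\in\Omega'$, I would apply (III') at the base point $\theta^i\omega$ with $\ell=g_r$, Lipschitz factor $\varphi^\pm$ and bounded factor $\psi=1_{\{\tau_A^{\theta^{i+g_r}\omega}>j\}}$; using $\varphi^+$ for the upper bound and $\varphi^-$ for the lower one, then taking the supremum in $j$, one obtains
\begin{align*}
&\sup_{j\ge 1}\Bigl|\mu_{\theta^i\omega}\bigl(A\cap(f_{\theta^i\omega}^{g_r})^{-1}\{\tau_A^{\theta^{i+g_r}\omega}>j\}\bigr)-\mu_{\theta^i\omega}(A)\,\mu_{\theta^{i+g_r}\omega}(\tau_A^{\theta^{i+g_r}\omega}>j)\Bigr|\\
&\qquad\le C(\theta^i\omega)\,\gamma(g_r)\,\rho_r^{-1}+\mu_{\theta^i\omega}\bigl(B(x,r+\rho_r)\setminus B(x,r-\rho_r)\bigr).
\end{align*}
Summing over $i=1,\dots,k_r$ splits $H_r(\omega)$ into a \emph{mixing} part $\gamma(g_r)\rho_r^{-1}\sum_i C(\theta^i\omega)$ and an \emph{annular} part $\sum_i\mu_{\theta^i\omega}(B(x,r+\rho_r)\setminus B(x,r-\rho_r))$.

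The mixing part would be handled as in Lemma~\ref{lem:HNas}: the ergodic theorem applied to $C^p\in L^1$ gives $\sum_{i\le k_r}C(\theta^i\omega)^p=O(k_r)$ a.s., and the subadditivity for $p\in(0,1]$ yields $\sum_{i\le k_r}C(\theta^i\omega)\le(\sum C^p)^{1/p}=O(k_r^{1/p})$; since $x\in X'$ forces $k_r\le t\,r^{-(d_1+\eps)}$ for arbitrarily small $\eps>0$ and $\gamma$ decays faster than any power of its argument, this term tends to zero $\PP$-almost surely. For the annular part, stationarity of $\mu$ together with two applications of (IV') bounds its expectation by $O(k_r r^{-b}\rho_r^a)=O(r^{av-d_1-b-\eps})$, which vanishes by our choice of $v$; Markov's inequality then converts this into convergence in probability.

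The hard part is the balancing act between $\rho_r$ and $g_r$: shrinking $\rho_r$ sharpens the Lipschitz approximation of $1_A$ but inflates the Lipschitz constant $\rho_r^{-1}$ entering (III'), so the gap $g_r$ must be taken large enough that the super-polynomial factor $\gamma(g_r)$ defeats the polynomial blow-up $\rho_r^{-1}k_r^{1/p}$. This is precisely where the super-polynomial character of $\gamma$ and the quantitative annular estimate (IV') become indispensable; the fact that we can only control $C$ in $L^p$ with $p\in(0,1]$ merely worsens the polynomial loss from $k_r$ to $k_r^{1/p}$, which is still absorbed.
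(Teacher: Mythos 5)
Your proposal is correct and follows essentially the same route as the paper: approximate $1_{B(x,r)}$ by $\rho_r^{-1}$-Lipschitz functions, apply (III') to split $H_r$ into a mixing term $\gamma(g_r)\rho_r^{-1}\sum_i C(\theta^i\omega)$ (handled as in Lemma~\ref{lem:HNas}) and an annular term controlled in expectation via (IV') and Markov's inequality. The only cosmetic difference is that you use a two-sided approximation $\varphi^\pm$ where the paper uses the one-sided $\varphi_{x,r}$, which changes nothing of substance.
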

\begin{proof}
Using assumption (III'), with the function $\varphi_{x,r}$ introduced in Lemma~\ref{lem:MNadapt} to approximate $B(x,r)$, we get
\[
H_r(\omega) \le \sum_{i=1}^{k_r} C(\theta^i\omega)\gamma(g)\rho^{-1} + 2\sum_{i=1}^{k_r} \mu_{\theta^i\omega}(B(x,r+\rho)\setminus B(x,r)).
\]
The first term is treated as in Lemma~\ref{lem:HNas} to get a.s. convergence to zero. 
For the second one its integration over $\Omega$ gives
\[
2 k_r \mu(B(x,r+\rho)\setminus B(x,r)) \le r^c
\]
for some constant $c>0$.
\end{proof}

The proofs that $K_r$ converge to zero a.s. $\omega$ may be proven exactly as in the previous section
so we do not add the details. This proves Theorem~\ref{thm:main2}.

\begin{remark}
If we strengthen assumptions (I') and (III') to allow functions $\varphi$ which are {\it dynamically Lipschitz}, 
such that $B(x,r)\cap \{\tau_{B(x,r)}\le g_r\}$ may be well approximated by these functions, then the proof of Lemma~\ref{lem:GNas} may be adapted to this setting
and give the a.s. convergence for all sequences $r_n=a^n$ (see also Remark~\ref{rem:an}).
This strategy should work for example for the class of random unimodal maps as studied in~\cite{BBM}.
\end{remark}

\section{Examples}\label{s:examples}

In this section we provide some examples that fulfill the hypotheses of our main theorems. 

\begin{example}\label{exbernougibbs}
Let $s\ge1$ and $(\Omega,\theta)$ be a subshift of finite type on the symbolic space $\{0,1,\ldots,s\}^\ZZ$ 
endowed with the distance $d_\Omega(\omega,\tilde \omega)=\sum_{n\in\mathbb Z} 2^{-|n|}|\omega_i-\tilde \omega_i|$.
Let $\PP$ be a Gibbs measure from a H\"older potential.

Let $b\ge1$ and make the shift $\{0,1,\ldots,b\}^\NN$ a random subshift by putting on it the random Bernoulli measures
constructed as follows. Let $W=(w_{ij})$ be a $s\times b$ stochastic matrix with entries in $(0,1)$ and set $q=\max(w_{ij})$. 
Set $p_j(\omega)=w_{\omega_0,j}$. The random Bernoulli measure 
$\mu_\omega$ is defined by $\mu_\omega([x_0\dots x_n])=p_{x_0}(\omega)p_{x_1}(\theta\omega)\dots p_{x_n}(\theta^n\omega)$.
Since $\mu_\omega$ are Bernoulli measures, 
one can observe easily that for all $m,n$, $A\in\F_0^n$ and $B\in \F_0^{m}$:
\begin{equation}\label{prop1bernougibbs}
\left|\mu_\omega(A\cap\sigma^{-g-n}B) -\mu_\omega(A)\mu_{\theta^{n+g}\omega}(B)\right|=0
\end{equation}
for every $g\geq 1$ and every $\omega\in\Omega$. Thus, property (III) is satisfied. 

Moreover, we obtain that for every cylinder $[x_0\dots x_n]$ and  $\omega\in\Omega$
\[
\mu_\omega([x_0\dots x_n])\le q^{n+1}
\]
for all $n\ge 1$, which implies property (II).

Now we will prove that property (I) holds for the marginal probability measure  
$\mu=\int_\Omega \mu_\omega \,d\mathbb P$. 
The proof explores the mixing properties of the base dynamics 
$\theta:\Omega\to\Omega$. In order to estimate the decay 
for the integrated measure $\mu$ we write, for $m,n$, $A\in\F_0^n$ and $B\in \F_0^{m}$:
\begin{align*}
\left|\mu(A\cap\sigma^{-n-g}B)-\mu(A)\mu(B)\right|
	& =\left|\int_\Omega\mu_\omega(A\cap\sigma^{-n-g}B)d\PP-\int_\Omega\mu_\omega(A)d\PP\int_\Omega\mu_\omega(B)d\PP\right|\\	
	&\leq  \int_\Omega\left|\mu_\omega(A\cap\sigma^{-n-g}B)-\mu_\omega(A)\mu_{\theta^{n+g}\omega}(B)\right|d\PP \\
	& + \left|\int_\Omega\mu_\omega(A)\mu_{\theta^{n+g}\omega}(B)d\PP-\int_\Omega\mu_\omega(A)d\PP\int_\Omega\mu_\omega(B)d\PP\right|
\end{align*}
for all $g\geq1$.
Using \eqref{prop1bernougibbs}, the first term in the right hand side above is null. To control the second term consider 
the partition of $\Omega$ given by sets $U\cap \theta^{-n-g}V$, where $U,V$ are cylinders of rank $n,m$ respectively.
By definition of the Bernoulli measure, the value $\mu_\omega(A)$ for $\omega\in U$ is constant equal to say $\eta_{U,A}$.
We denote analogously by $\eta_{V,B}$ the value taken by $\mu_\omega(B)$ for $\omega \in V$. The Gibbs measure $\PP$ is exponentially $\psi$-mixing 
in the sense that there exists a function $\psi$ such that 
\[
\left|\PP(U\cap \theta^{-n-g}V)-\PP(U)\PP(V)\right| \le 
\psi(g) \PP(U)\PP(V),
\]
with $\psi(g)\to0$ exponentially fast.
Writing 
\[
\int_\Omega \mu_\omega(A)\mu_{\theta^{n+g}\omega}(B)d\PP(\omega)
= \sum_{U,V} \eta_{U,A}\eta_{V,B} \PP(U\cap \theta^{-n-g}V)
\]
and
\[
\int_\Omega \mu_\omega(A)d\PP(\omega) = \sum_U \eta_{U,A} \PP(U),\quad
\int_\Omega \mu_\omega(B)d\PP(\omega) = \sum_V \eta_{V,B} \PP(B)
\]
 
we get that 
\[
\left|\mu(A\cap\sigma^{-n-g}B)-\mu(A)\mu(B)\right|
\le
\sum_{U,V} \PP(U)\PP(V) \psi(g) \le \psi(g),
\]
which decays exponentially fast with $g$ (independently of $m$ and $n$) and property (I) holds.

Therefore, it follows from our results that for $\mu$-almost every $\un{y}$, $\PP$-almost every $\omega$ and all $t\ge 0$ we have 
\[
\mu_\omega\left(\un{x}\in X\colon R_n(\un{x},\un{y})>\frac{t}{\mu(C^n(\un{y}))}\right)\to e^{-t},
	\; \text{ as } n\to\infty.
\]

and that for $\mu$-almost every $y\in X$ and all $t\ge 0$ 
we have 
$$
\mu \left(x\in X\colon R_n(\un{x},\un{y})>\frac{t}{\mu (C^n(\un{y}))}\right)\to e^{-t},
	\; \text{ as } n\to\infty.
$$

\end{example}

The next example shows that the sample measures $\mu_\omega$ and the marginal $\mu$ can be mutually singular for a.e. $\omega$,
as announced in Section~\ref{sec:statement}. This is a special case of Example~\ref{exbernougibbs}.

\begin{example}\label{ex12}
Let $\Omega=\{0,1\}^\ZZ$ with the shift $\theta$ and the Bernoulli measure $\PP$ with weights $(1/2,1/2)$.
Make the shift $\{0,1\}^\NN$ a random shift by putting on it the random Bernoulli measures
constructed as follows. Take $p\in(0,1/2)$ and $q=1-p$. Set $p_0(\omega)=p$ if $\omega_0=0$ and $q$ otherwise,
and $p_1(\omega)=1-p_0(\omega)$. The random Bernoulli measure
$\mu_\omega$ is defined by $\mu_\omega([x_0\dots x_n])=p_{x_0}(\omega)p_{x_1}(\theta\omega)\dots p_{x_n}(\theta^n\omega)$.
Indeed $\mu_\omega([x_0\dots x_n])=p^{k_n(\omega,x)}q^{n-k_n(\omega,x)}$ where $k_n(\omega,x)$ is the 
number of 
$i=0,\dots,n$ such that $\omega_i=x_i$. The marginal measure $\mu$ is 
\[
\mu([x_0\dots x_n]) = \int_\Omega \mu_\omega([x_0\dots x_n]) d\PP(\omega)
=\int p_{x_0}(\omega)d\PP(\omega)\dots \int_\Omega p_{x_n}(\theta^n\omega)d\PP(\omega)
\]
since each $p_{x_k}(\theta^k\omega)$ depends only on $\omega_k$ and the $\omega_k$'s are independent.
Moreover each of these integrals is equal to $(p+q)/2=1/2$. Therefore $\mu$ is the Bernoulli measure on 
$\Omega$ with weights $(1/2,1/2)$.
Next, if the density $\rho_\omega$ of the probability measure $\mu_\omega$ with respect to $\mu$ exists on a set of positive measure then the limit of the ratio
\[
\mu_\omega([x_0\dots x_n])/\mu([x_0\dots x_n]) = p^{k_n(\omega,x)}q^{n-k_n(\omega,x)}2^n
\]
should exists $\mu$-almost everywhere on this set, by Lebesgue differentiation theorem, and should be equal to 
$\rho_\omega(x)$. However, since $k_n$ has increments $0$ or $1$ the only possible limits of the ratio are $0$ or $\infty$ since $p$ and $q$ are not equal to $1/2$. Since the density $\rho_\omega$ cannot have finite nonzero value, therefore it does not exist. 
\end{example}

Note that our results extend to the random dynamical systems context some results obtained for in the deterministic 
setting in~\cite{sau}, in which case we take $\mathbb P$ to be a Dirac measure at a fixed point for $\theta$. 
In particular we obtain applications 
to the thermodynamical formalism of random dynamical systems. Let $S:  \mathcal E\to\mathcal E$ be
as before the skew-product  given by $S(\omega,x)=(\theta(\omega), \sigma(x))$. Given a measurable potential
$\phi:\mathcal E\to\mathbb R$ set $\text{var}_n \phi(\omega)=\sup\{ |\phi(\omega,x) - \phi(\omega, \tilde  x)| :
x_i =\tilde x_i \text{ for all } i< n \}$. If $\phi$ satisfies $\int \sup_x |\phi(\omega,x)| \, d\mathbb P(\omega)<\infty$
and $\text{var}_n \phi(\omega)\le K_\phi(\omega) e^{-\tau n}$ for all $n\ge 1$, for some random variable with 
$\log K_\phi \in L^1(\mathbb P)$, then the variational principle holds
\begin{equation*}
\pi_S(\phi)=\sup_{\eta} \Big\{ h_\eta(S) +\int \phi \, d\eta \Big\},
\end{equation*}
where the supremum is taken over all $S$-invariant probability measures and $\pi_S(\phi)$ denotes
the topological pressure of $S$ with respect to $\phi$ c.f. ~\cite{boggun,kif}.
We say that an $S$-invariant probability measure $\mu$ is an \emph{equilibrium state
for $S$ with respect to $\phi$} if it attains the previous supremum. In addition, we say that a probability 
measure $\mu$ that admits a disintegration $(\mu_\omega)_\omega$ is a \emph{fiber Gibbs measure 
with respect to $\phi$} if there exist random variables $\lambda=\lambda(\omega)$, $C_\phi=C_\phi(\omega)>0$ 
such that $\int \log C_\phi(\omega) \; d\mathbb P(\omega)<\infty$ and
\begin{equation}\label{eq:fiber.Gibbs}
C_\phi(\omega)^{-1} 
	\leq \frac{\mu_\omega( C_n(\underline{x})  ) }
	{\exp \big( -\log \prod_{j=0}^{n-1} \lambda(\theta^j(\omega)) +\sum_{j=0}^{n-1} \phi (S^j(\omega,\underline{x}))  \big)}
	\leq C_\phi(\omega)
\end{equation}
for $\mathbb P$-a.e. $\omega$, every $\underline{x}\in \mathcal E_\omega$ and $n\ge 1$.
In fact, under the previous conditions, it follows from \cite[Theorem~2.1]{kif} that there exists a unique equilibrium
state for $S$ with respect to $\phi$ and that it is a fiber Gibbs measure. Although in general the measure of cylinders 
may decay exponentially to zero but not uniformly in $\omega$ we build an example below where this is not the case.
Let us mention that in most of the known results the thermodynamical formalism follows from a carefull analysis of transfer operators. Given $\omega\in\Omega$ the associated random Perron-Frobenius operator is 
\begin{equation}\label{eq:transferop}
 (\mathcal L^\omega g)(x)= \sum_{S(\omega,y)=x} e^{\phi(\omega,y)} g(y)
 \end{equation}
and, in our context, for every continuous $g: \mathcal E_\omega \to \mathbb R$ it defines a continuous function  
$\mathcal L^\omega g$ on $\E_{\theta(\omega)}$. Set $\mathcal L^{\omega,n}=\mathcal L^{\theta^{n-1}(\omega)}  \circ \dots \circ \mathcal L^{\theta(\omega)}\circ \mathcal L^{\omega}$ for all $n\ge 0$.

Let us mention that these results also hold for the random composition of any finite number of uniformly expanding maps as in Theorem~\ref{thm:main2}.

\begin{example} 
Set $\Omega=\{0,1\}^{\mathbb Z}$ with the distance 
$d_\Omega(\omega,\tilde \omega)=\sum_{n\in\mathbb Z} 2^{-n}|\omega_i-\tilde \omega_i|$.
Let $f_0,f_1: \mathbb T^d \to \mathbb T^d$ be $C^{2}$-smooth expanding maps and $d$ denote the product
metric in $\Omega\times \mathbb T^d$.
Assume that $\mathbb P$ is a Bernoulli measure on $\Omega$ as before and consider the potential 
$\phi: \mathcal E \to \mathbb R$ given by $\phi(\omega,x)=-\log|\det Df_{\omega_0}(x)|$, which is piecewise constant.
By the change of variables formula we obtain that the probability measures $\nu_\omega=Leb$ are conformal, in
the sense that $(\mathcal L^\omega)^*\nu_\omega=\nu_{\theta(\omega)}$. Moreover, it follows from \cite[Theorem~2.1]{kif} that there exists a measurable family of continuous and integrable functions 
$(h_\omega)_\omega$ such that $\mathcal L^\omega h_{\omega} = \lambda_\omega h_{\theta(\omega)}$, 
that $\int h_\omega \,d\nu_\omega=1$ and that $(\mu_\omega)_\omega$ given by $d\mu_\omega=h_\omega d\nu_\omega$ is the unique $S$-invariant probability measure that is an equilibrium state for $S$ with respect 
to $\phi$.

Now, since $f_{\omega}$ is either $f_0$ or $f_1$ (finite number of functions) then the potential $\phi$ is locally constant and the family $(\mathcal{L}_\omega)_\omega$ of transfer operators reduce to finitely
many of them. Consequently, there are uniform constants $L>1$ and $\varepsilon>0$ such that for 
$\mathbb P$-a.e. $\omega\in \Omega$ the cones 
$$
\Lambda^\omega_L
	=\Big\{   g>0 : g(x) \le g(y) \exp( L d(x,y)^\alpha))  \text{ for all } d(x,y)<\varepsilon 	\Big\}
$$
of continuous functions are strictly preserved by the random Perron-Frobenius operator for all positive iterates. More precisely, 
for all $n\ge 1$  one has
$\mathcal L^{\omega,n} (\Lambda^\omega_L) \subset \Lambda^{\theta^n(\omega)}_{2L/3}$
 (c.f. \cite[Equations (4.9) and (4.11)-(4.19)]{kif}). 
Since each function $h_\omega$ belongs to the cone $\Lambda^\omega_L$ of observables and also $\int h_\omega d\nu_\omega=1$ it holds that these have uniform H\"older constants. 

In fact, we use the fact that $h_\omega=\lim_{n\to\infty} \mathcal{L}^{\theta^{-n}\omega,n}1$ and the speed of convergence is exponential, meaning that there exists $a>0$ so that
\begin{equation}\label{kif443}
\|h_\omega-\lim_{n\to\infty} \mathcal{L}^{\theta^{-n}\omega,n}1\| \leq e^{-a n}
\end{equation}
for all $n\ge 1$ large, to prove that $\Phi_\omega = \int \varphi h_\omega d\text{Leb}$ varies H\"older continuously 
with $\omega$.  For completeness let us mention that equation~\eqref{kif443} above corresponds to \cite[Equation~4.43]{kif}  with $C_\omega =C_0$ constant.
Now, if one assumes $\omega$ and $\omega'$ are in the same $-2n, \dots, 2n$ cylinder then 
\begin{align*}
|\Phi_\omega - \Phi_{\omega'}| 
	& \leq  \left(\int \varphi d\text{Leb}\right) \; \sup_X |h_\omega-h_{\omega'}| \\
	& \leq \left(\int \varphi d\text{Leb}\right) \; 
			\left( \sup_X |\mathcal L^{\theta^{-n}\omega,n}1-\mathcal L^{\theta^{-n}\omega',n}1| + 2e^{-a n}\right).
\end{align*}
Since $\theta^{-n}\omega$ and $\theta^{-n}\omega'$ are in the same $[-n..2n]$ cylinder and $\mathcal L^\omega$ is
locally constant then then the first summand in the right hand side above is null, leading to  
$|\Phi_\omega-\Phi_{\omega'}| \leq  2 e^{- a n}$ thus proving the claim that $\omega \mapsto \Phi_\omega$ is H\"older continuous.

Moreover, since each function $h_\omega$ belongs to the cone $\Lambda^\omega_L$ of observables and also $\int h_\omega d\nu_\omega=1$  there
exists a uniform constant $\tilde K\ge 1$ (depending only on $\epsilon$) such that 
$| h_\omega(x) | \le \tilde K e^{L \,diam (M)^\alpha}$ and consequently there exists a uniform constant $K>0$ such that 
$\|h_\omega\|_\infty \le K$ for $\mathbb P$-a.e. $\omega$. 
In consequence,
$$
\mu_\omega (B(x,r)) \leq K Leb (B(x,r)) \leq c r^{d}
$$
and, since $\mu=\int \mu_\omega d\mathbb P$, then $\mu$ is also absolutely continuous with respect to $Leb$ with densitiy bounded by $K$. As a consequence we get for all small $r, \rho$ that
$$
 \mu(B(x,r+\rho))\le c (r+\rho)^d \le \mu(B(x,r))+r^{-1}\rho.
$$
This proves that (II') and (IV') hold.

We are now left to discuss the mixing properties (I') and (III'). The fiber mixing property (III') is a consequence of Theorem~2.2 and Equation~5.19 in~\cite{kif} that there exists a random variable $C$ so that 
\[
\left|\int_X \varphi \, (\psi\circ f_\omega^\ell) \, d\mu_\omega-\int_X\varphi \, d\mu_\omega\int_X\psi \, 
d\mu_{\theta^\ell\omega}\right|
\le C(\omega) \gamma(\ell) Lip(\varphi)\sup|\psi|.
\]
We refer the reader to Lemma~6.3 in \cite{KifLim} for the precise estimates leading to the previous
expression.  Furthermore, since there are finitely many expanding
maps, one can check in Section 4 and Section 5 of \cite{kif} that the random variable $C$ can be taken bounded from above by a uniform constant $C_0$.

Now we will prove that property (I') holds for the marginal probability measure  
$\mu=\int \mu_\omega \,d\mathbb P=\int h_\omega d Leb \,d\mathbb P$. 
The proof explores the mixing properties of $\theta:\Omega\to\Omega$. In order to estimate the decay 
for the integrated measure $\mu=\int \mu_\omega \;d\mathbb P$ we write for all $n\ge 1$
\begin{align}
\Big| \int \varphi &(\psi\circ f_\omega^\ell) \, d\mu - \int \varphi \,d\mu \; \int \psi\,d\mu \Big| \nonumber \\
	& =\left| \int \int \varphi (\psi\circ f_\omega^\ell) \, d\mu_\omega d\PP - \int \varphi \,d\mu_\omega d\PP \; 
		\int \psi\,d\mu_{\theta^\ell(\omega)} d\PP \right| \nonumber \\
	& \leq  \int  \left| \int \varphi (\psi\circ f_\omega^\ell) \, d\mu_\omega 
		-\int \varphi \,d\mu_\omega  \int \psi \,d\mu_{\theta^\ell(\omega)} \right| \; d\PP  \label{eqn1}\\
	& + \left|   \int \left( \int \varphi \,d\mu_\omega  \int \psi \,d\mu_{\theta^\ell(\omega)} \right) d\PP   - \int \varphi \;d\mu \int \psi \; d\mu\right|.  \label{eqn2}
\end{align}
On the one hand, that by (III') the term in equation~\eqref{eqn1} is bounded from above by 
$C_0 \; \gamma(\ell) \sup(\psi) \|\varphi\|_\alpha$. On the other hand, by the exponential decay of correlations for the shift $\theta$ we get, if one considers the  observables $\Phi_\omega=\int \varphi d\mu_\omega
=\int \varphi h_\omega d\text{Leb}$ and  $\Psi_\omega=\int \psi d\mu_\omega=\int \psi h_\omega d\text{Leb}$ then the expression in 
equation~\eqref{eqn2} is such that 
\begin{align*}
\Big| \int \Phi_\omega \,  \Psi_{\theta^\ell(\omega)} d\mathbb P - \int \Phi_\omega \, d\PP \int \Psi_\omega d\PP\Big|
	 & \leq K \sup(\Psi_\omega) \|\Phi_\omega\|_\alpha \gamma(\ell)\\
	 & \leq K \sup(\Psi) \|\Phi\|_\alpha \gamma(\ell)
\end{align*}
for some positive constant $K$ and all $\ell\ge 1$. This proves that condition (I') holds.

Finally, let us prove the random aperiodicity condition (V'). We observe that
\begin{align*}
\nu ( (\omega,x)\in \mathcal{E} \colon \exists n\in\mathbb N \;\; f_\omega^n(x)=x ) 
		& \leq  \sum_{n\ge 1}  \nu ( (\omega,x)\in \mathcal{E} \colon  f_\omega^n(x)=x )  \\
		& \leq \sum_{n\ge 1}  \;\int_\Omega \mu_\omega \Big( x\in X \colon  f_\omega^n(x)=x \Big) \; d\mathbb P 
\end{align*}
and, since $\mu_\omega\ll \text{Leb}$, it is enough to prove that 
$ \text{Leb} ( x\in X \colon  f_\omega^n(x)=x ) =0$ for $\mathbb P$-almost every $\omega$.
This property follows immediately from the fact that $f^n_\omega$ is an expanding map, since the periodic
points of all periods are isolated and thus finite. This proves that $S:\mathcal E \to \mathcal E $ is random aperiodic as claimed.

Therefore, it follows from our Theorem~\ref{thm:main2} that for $\mu$-almost every $\un{y}$ and all $t\ge 0$ 
we have 
$$
\mu_\omega \left(x\in X\colon  \tau^\omega_{B(y,r)}(x) >\frac{t}{\mu (B(y,r))}\right)\overset{\mathbb{P}}{\to} e^{-t},
	\; \text{ as } n\to\infty.
$$
\end{example}


\begin{thebibliography}{99}


\bibitem{AG}
\newblock M. Abadi and A. Galves,
\newblock \emph{Inequalities for the ocurrence times of rare events in mixing processes. The state of the art},
\newblock Markov Process. Related Fields, \textbf{7} (2001),97--112.


\bibitem{AFV}
Hale Ayta\c c, Jorge Milhazes Freitas and Sandro Vaienti
\newblock \emph{Laws of rare events for deterministic and random dynamical systems}
\newblock Preprint, arXiv:1207.5188v1


\bibitem{BBM}
\newblock V. Baladi, M. Benedicks and V. Maume-Deschamps
\newblock \emph{Almost sure rate of mixing for iid unimodal maps}
\newblock Ann. Sci. ENS~\textbf{4-35} (2002) 77--126.


\bibitem{BS}
\newblock L. Barreira and B. Saussol,
\newblock \emph{Hausdorff dimension of measures via Poincar\'e recurrence},
\newblock Comm. Math. Phys, \textbf{219} (2001), 443--463.


\bibitem{boggun} 
T. Bogensch\"{u}tz and V.M. Gundlach, \emph{Ruelles' transfer operator for random subshifts of finite type},
 Erg. Th. Dynam. Sys.~\textbf{15} (1995) 413--447.

\bibitem{B}
M. Boshernitzan,
\newblock \emph{Quantitative recurrence results},
\newblock Invent. Math., \textbf{113} (1993), 617--631.

\bibitem{Bi}
P. Billingsley,
\newblock \emph{Probability and measure},
\newblock Wiley Series in Probability and Mathematical Statistics, Hohn Wiley \& Sons Inc., New York, third edition. 1995.


\bibitem{C}
Z. Coelho,
\newblock \emph{Asymptotic laws for symbolic dynamical systems},
\newblock in Topics in symbolic dynamics and applications (Temuco, 1997), vol. 279 of London Math. Soc. Lecture Note Ser., Cambridge Univ. Press, Cambridge, 2000, 123--165.

\bibitem{Co}
P. Collet,
\newblock \emph{Some ergodic properties of maps of the interval},
\newblock in Dynamical systems (Temuco, 1991/1992), vol. 52 of Travaux en Cours, Hermann, Paris, 1996, 55-91.

\bibitem{CGS}
P. Collet, A. Galves and B. Schmitt,
\newblock \emph{Unpredictability of the occurrence time of a long laminar period in a model of temporal intermittency},
\newblock Ann. Inst. H. Poincar\'e Phys. Th\'eor., \textbf{57} (1992), 319--331.

\bibitem{FFT1}
A.C.M. Freitas, J.M. Freitas and M. Todd, 
\newblock \emph{Hitting Time Statistics and Extreme Value Theory},
\newblock  Probab. Theory Related Fields, \textbf{147} (2010), no. 3, 675--710.

\bibitem{FFT2}
A.C.M. Freitas, J.M. Freitas and M. Todd, 
\newblock \emph{Extreme Value Laws in Dynamical Systems for Non-smooth Observations},
\newblock J. Stat. Phys., \textbf{142} (2011), no. 1, 108-126.


\bibitem{G1}
S. Galatolo, 
\newblock \emph{Hitting time and dimension in axiom A systems, generic interval exchanges and an application to Birkoff sums}, 
\newblock J. Stat. Phys., \textbf{123} (2006), 111--124.

\bibitem{G2}
S. Galatolo, 
\newblock \emph{Dimension and hitting time in rapidly mixing systems},
\newblock Math. Res. Lett., \textbf{14} (2007),  797--805.


\bibitem{hay}
N. Haydn,
\newblock \emph{Statistical properties of equilibrium states for rational maps},
\newblock Ergodic Theory Dynam. Systems, \textbf{20} (2000), 1371--1390.


\bibitem{HLV}
N. Haydn, Y. Lacroix and S. Vaienti,
\newblock \emph{Hitting and return times in ergodic dynamical systems},
\newblock Ann. Probab., \textbf{33} (2005), 2043--2050.

\bibitem{Hir}
M. Hirata,
\newblock \emph{Poisson law for Axiom A diffeomorphisms},
\newblock Ergodic Theory Dynam. Systems, \textbf{13} (1993), 533-556.


\bibitem{hsv}
M. Hirata, B. Saussol and S. Vaienti,
\emph{Statistics of return times: a general framework and new applications},
Communications in Mathematical Physics~{\bf 206} (1999) 33--55.


\bibitem{kif} Yu. Kifer, \emph{Thermodynamic formalism for random transformation revisited},
Stochastics and Dynamics~\textbf{8}, no. 1 (2008) 77--102.


\bibitem{KifLim} Yu. Kifer, \emph{Limit Theorems for Random Transformations and Processes in Random Environments}, Trans. Amer. Math. Soc., 350, no. 4, 1481--1518, 1998.


\bibitem{kifliu} Yu. Kifer and P.-D. Liu, Random dynamics, \emph{Handbook of dynamical systems}, eds. B. Hasselblatt and A. Katok (Elsevier 2006), pp. 379--499.

\bibitem{MarieR}
\newblock P. Marie and J. Rousseau,
\newblock \emph{Recurrence for random dynamical systems},
\newblock  Discrete Contin. Dyn. Syst. , \textbf{30} (2011),no. 1, 1--16.

\bibitem{ow} D. Ornstein and B. Weiss, \emph{Entropy and data compression schemes}, IEEE Trans. Inform. Theory~\textbf{39} 78--83

\bibitem{P}
B. Pitskel,
\newblock \emph{Poisson limit law for Markov chains},
\newblock Ergodic Theory Dynam. Systems, \textbf{11} (1991), 501-513.

\bibitem{R}
J. Rousseau,
\newblock \emph{Recurrence rates for observations of flows},
\newblock Ergodic Theory Dynam. Systems, \textbf{32} (2012), 1727-1751.


\bibitem{RS}
\newblock J. Rousseau and B. Saussol,
\newblock \emph{Poincar\'e recurrence for observations},
\newblock  Trans. Amer. Math. Soc., \textbf{362} (2010), 5845--5859.

\bibitem{S}
\newblock B. Saussol,
\newblock \emph{Recurrence rate in rapidly mixing dynamical systems},
\newblock Discrete Contin. Dyn. Syst., \textbf{15} (2006), 259--267.

\bibitem{sau} B. Saussol, An introduction to quantitative Poincar\'e recurrence in Dynamical Systems,
\emph{Reviews in Mathematical Physics}~\textbf{21}, no. 8, 949--979 (2009)

\bibitem{v} P. Varandas, \emph{Correlation decay and recurrence asymptotics for some robust nonuniformly hyperbolic maps}, J. Stat. Phys.~\textbf{133}, 813--839, (2008)

\bibitem{zhu} Y. Zhu, \emph{On local entropy of random transformations}, Stochastics and Dynamics~\textbf{8}, no.2, 197--207, (2008)


\end{thebibliography}
\end{document}